\theoremstyle{plain}
\newtheorem{theorem}{Theorem}[section]
\newtheorem{lemma}[theorem]{Lemma}
\newtheorem{proposition}[theorem]{Proposition}
\theoremstyle{definition}
\newtheorem{remark}[theorem]{Remark}
\newtheorem{assumption}[theorem]{Assumption}
\numberwithin{equation}{section}
\title{Convergence Analysis of Levenberg-Marquardt Method for Inverse Problem \\ with H\"{o}lder Stability Estimate}
\author{Akari Ishida, Sei Nagayasu and Gen Nakamura}
\date{}
\begin{document}
\maketitle
\begin{abstract}
We analyze convergence of the Levenberg-Marquardt method for solving nonlinear inverse problems in Hilbert spaces. Specifically, we establish local convergence and convergence rates for a class of inverse problems that satisfy H\"{o}lder stability estimate. Furthermore, based on what we found in the mentioned analysis, we develop global reconstruction algorithms for solving inverse problems with finite measurements for exact and noisy data, respectively. \\\\
Keywords: Inverse problems, Ill-posed operator equations, Regularization, Levenberg-Marquardt method \\\\
MSC: 47J25, 65J22, 35R30
\end{abstract}

\section{Introduction} \label{section:introduction}
In this paper, we study nonlinear operator equation~$F(x)=y$ that arises from a nonlinear inverse problem in Hilbert spaces~$X,\,Y$, where $x\in X$ and $y\in Y$ are an exact data and a solution, respectively. The data~$y$ may include noise, which we refer to $y$ as noisy data. This problem is ill-posed in the sense that its solution $x$ could be not unique, and even if it is unique, $x$ would not depend continuously on $y$. This ill-posedness becomes more severe for noisy data. Therefore, regularization methods are required for stably deriving a solution and an approximate solution for exact and noisy data, respectively.

Several regularization methods, such as the Landweber iteration and Levenberg-Marquardt method~(LM-method), have been developed to solve nonlinear inverse problems. An overview of nonlinear regularization methods can be found in Kaltenbacher-Neubauer-Scherzer~\cite{KNS}, Kirsch~\cite{Kirsch} and Schuster-Kaltenbacher-Hofmann-Kazimierski~\cite{SKHK}, for instance. As far as we know, when many regularization methods were first proposed, their convergence was proved under the so-called tangential cone condition~(see \eqref{TCC} in Section~\ref{section:exact}). Since the algorithm of the Landweber iteration is simple, it has been extensively studied.

We first mention some results related to the Landweber iteration for nonlinear operator equations. The Landweber iteration for nonlinear operator equations in Hilbert spaces was first developed in Hanke-Neubauer-Scherzer~\cite{HNS} in 1995. Since the 2000s, active studies on regularization in Banach spaces have started. For examples, in 2006, Sch\"{o}pfer-Louis-Schuster~\cite{SLS} proposed the Landweber iteration for linear operator equations in Banach spaces. This work was extended to nonlinear operator equations by Kaltenbacher-Sch\"{o}pfer-Schuster~\cite{KSS} in 2009. We note that both \cite{HNS} and \cite{KSS} studied convergence of the Landweber iteration for nonlinear operator equations under the tangential cone condition. On the other hand, de Hoop-Qiu-Scherzer~\cite{dHQS} studied the Landweber iteration for exact data assuming H\"{o}lder stability estimate (see \eqref{F;inv-Holder} in Section~\ref{section:exact}). This result was extended to noisy data case by Mittal-Giri~\cite{MG}. We remark here that combining H\"{o}lder stability estimate with the Fr\'{e}chet differentiability of $F$ and the Lipschitz continuity of $F'$ implies the tangential cone condition (see Remark~\ref{rmk:TCC;Holder} in Section~\ref{section:exact}). In \cite{HNS} and \cite{KSS}, convergence rates were given by assuming both the tangential cone condition and an additional condition on the data, known as the source condition. In contrast, \cite{dHQS} and \cite{MG} demonstrate that H\"{o}lder stability estimate can serve as a single, alternative condition that substitutes for both the tangential cone condition and the source condition, making it possible to establish convergence rates.

The following is known regarding the stability estimate for typical inverse problems related to partial differential equations, such as electrical impedance tomography (EIT) for determining unknown electrical conductivity coefficients and inverse scattering problems for determining unknown acoustic velocity coefficients. In this context, the stability estimate means is estimating the continuous dependence of the determination on the measurement data of unknown coefficients. For example, in the case of EIT, the problem is to determine the unknown electrical conductivity coefficient of the EIT electrical conductivity equation using measurement data, that is, voltage (input) and current (output), on the boundary of the domain in which this equation is defined. In mathematics, infinite measurements are allowed. Alessandrini~\cite{Alessandrini} showed that logarithmic stability estimate is optimal when there are no specific assumptions about the unknown coefficients. On the other hand, it is known that Lipschitz stability estimate holds for example if the unknown coefficients are piecewise constant, such as when the unknown coefficients can be represented by a finite number of unknown parameters (Alessandrini-de Hoop-Gaburro~\cite{AHG}, Alessandrini-de Hoop-Gaburro-Sincich~\cite{AHGS17}, Alessandrini-Vessella~\cite{AV}, Beretta-de Hoop-Qiu~\cite{BHQ}, Harrach~\cite{Harrach} and the references therein). It should be noted here that even though the number of unknown parameters is finite, this Lipschitz stability estimate was obtained by using infinite measurements. Hence, there is a very natural question, how to get these results under finite measurements. Alberti-Santacesaria~\cite{AS} systematically solved this question. Namely, for a class of infinite-dimensional inverse problems that satisfy Lipschitz stability estimate, they proved that inverse problems with finite measurements also satisfy Lipschitz stability estimate. Furthermore, they introduced a global reconstruction algorithm that combines the Lipschitz stability estimate derived in \cite{AS} with the results from \cite{dHQS}. Here, let us provide an additional explanation of the meaning of the mentioned global reconstruction algorithm. We first note that the convergence proved in \cite{dHQS} is local, as the initial data needed to start the iteration for the regularization method must be selected close to the solution by some initial guess. In contrast, \cite{AS} provides a method to choose the initial data without any initial geuess. We refer the cited work as inverse problems with finite measurements abbreviated by IFM.

Now, we mention some results for the LM-method. Its theoretical study is thoroughly given in Hanke~\cite{Hanke} and \cite{KNS} assuming the tangential cone condition. As for its numerical study, especially the numerical speed of convergence, the LM-method is superior than the Landweber iteration (see Hohage~\cite{Hohage}). 

Considering the results of these previous studies, we setup our aim of study in this paper as follows. Namely, it is to study the LM-method under the mentioned H\"older stability estimate and apply it to the IFM replacing the Landweber iteration by the LM-method so that we can improve the result of Alberti-Santacesaria~\cite{AS}.

The rest of this paper is organized as follows to achieve our aim. In Section~\ref{section:LM}, we introduce the LM-method. Section~\ref{section:exact} analyzes the convergence of the LM-method and establishes the convergence rate for exact data. For noisy data, Section~\ref{section:noisy} demonstrates convergence and provides its convergence rate. Finally, Section~\ref{section:application} is devoted to applying what we have obtained so far for the LM-method to the IFM.

\section{Preliminaries} \label{section:LM}
In this section, let us introduce the LM-method. This section provides a summary of the LM-method for noisy data to get into the main part of the paper. Any modification necessary for exact data will be given after finishing the noisy data case.

To begin with, we clarify more precisely what are the set up for the operator equation mentioned in Section~\ref{section:introduction}. Let $X$, $Y$ be real Hilbert spaces, $F: X\rightarrow Y$ be an operator with the domain~$\mathcal{D}(F)$ which is open and connected. We assume that $F$ has a continuous Fr\'{e}chet derivative. Consider the following nonlinear operator equation
\begin{equation} \label{NOE}
F(x)=y
\end{equation}
for given $y\in Y$. Let $x^{\dagger}$ generate the exact data $y$, that is, $F(x^{\dagger})=y$. We also consider the case $y$ has a noise with noise level $\delta>0$. Namely, the case obtained by replacing $y$ with $y^{\delta}$ such that
\begin{equation} \label{noise}
\|y^{\delta}-y\|_{Y}\leq\delta.
\end{equation}

The LM-method for \eqref{NOE} is an iteration method given as
\begin{equation} \label{LM}
x_{k+1}^{\delta}=x_{k}^{\delta}+(F'(x_{k}^{\delta})^{\ast}F'(x_{k}^{\delta})+\alpha_{k}I)^{-1}F'(x_{k}^{\delta})^{\ast}(y^{\delta}-F(x_{k}^{\delta})),\hspace{0.1in}k\in\mathbb{N}_{0}:=\mathbb{N}\cup\{0\}
\end{equation}
with an initial data~$x_{0}^{\delta}=x_{0}$. Here, the regularization parameter $\alpha_k>0$ is determined by Morozov's discrepancy principle. That is, the parameter~$\alpha_{k}>0$ is defined to satisfy
\begin{equation} \label{MDP}
\alpha_{k}\|(F'(x_{k}^{\delta})F'(x_{k}^{\delta})^{\ast}+\alpha_{k}I)^{-1}(y^{\delta}-F(x_{k}^{\delta}))\|_{Y}=q\|y^{\delta}-F(x_{k}^{\delta})\|_{Y}
\end{equation}
for a fixed $q\in(0, 1)$. We note that there exists a solution~$\alpha_{k}$ to \eqref{MDP} uniquely provided
\begin{equation} \label{unique;alpha}
\|y^{\delta}-F(x_{k}^{\delta})-F'(x_{k}^{\delta})(x^{\dagger}-x_{k}^{\delta})\|_{Y}\leq\frac{q}{\omega}\|y^{\delta}-F(x_{k}^{\delta})\|_{Y}
\end{equation}
holds for some $\omega>1$, see \cite[p.\,65]{KNS} for instance. We also point out that we present some properties that can be deduced under the assumption \eqref{unique;alpha} in this section. However, in the subsequent sections, we will not directly assume \eqref{unique;alpha}. This is because, as we will prove later, \eqref{unique;alpha} can be derived from H\"{o}lder stability estimate. 

In the case where we know only noisy data, we have to stop the iteration procedure after appropriate number of steps in order to act as a regularization method. We use the discrepancy principle, that is, the iteration is stopped after $k_{\ast}=k_{\ast}(\delta, y^{\delta})$ steps when the following stopping criterion
\begin{equation} \label{DiscrepancyPrinciple}
\|y^{\delta}-F(x_{k_{\ast}}^{\delta})\|_{Y}\leq\tau\delta<\|y^{\delta}-F(x_{k}^{\delta})\|_{Y},\hspace{0.2in}0\leq k<k_{\ast},
\end{equation}
with an appropriately $\tau>1$ is satisfied. In the exact data case, delete the superscript~$\delta$, and ignore the stopping criterion.

To proceed further, we prepare some fundamental lemmas. First of all, we provide an alternative expression for \eqref{MDP}.
\begin{lemma} \label{lem:KNS;4.9}
Let $0<q<1$. We assume that \eqref{unique;alpha} holds so that $\alpha_{k}$ can be defined via \eqref{MDP}. We define $x_{k}^{\delta}$ as \eqref{LM}. Then, we have
\begin{equation} \label{MDP'}
\|y^{\delta}-F(x_{k}^{\delta})-F'(x_{k}^{\delta})(x_{k+1}^{\delta}-x_{k}^{\delta}))\|_{Y}=q\|y^{\delta}-F(x_{k}^{\delta})\|_{Y}.
\end{equation}
\end{lemma}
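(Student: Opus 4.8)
The plan is to express both sides of the desired identity in terms of the residual $r_k^\delta := y^\delta - F(x_k^\delta)$ and the linearized operator $A := F'(x_k^\delta)$, and then show that the left-hand side of \eqref{MDP'} equals the quantity appearing on the left-hand side of the discrepancy equation \eqref{MDP}. First I would substitute the LM update \eqref{LM} into the expression $y^\delta - F(x_k^\delta) - F'(x_k^\delta)(x_{k+1}^\delta - x_k^\delta)$. Since $x_{k+1}^\delta - x_k^\delta = (A^\ast A + \alpha_k I)^{-1} A^\ast r_k^\delta$, the term I need to understand is $r_k^\delta - A(A^\ast A + \alpha_k I)^{-1} A^\ast r_k^\delta$.

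\medskip

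The key algebraic step is the standard filter-factor identity relating the two resolvent forms: one shows that
\begin{equation*}
I - A(A^\ast A + \alpha_k I)^{-1}A^\ast = \alpha_k (A A^\ast + \alpha_k I)^{-1}.
\end{equation*}
This is the commuting-resolvent identity $A(A^\ast A + \alpha_k I)^{-1} = (A A^\ast + \alpha_k I)^{-1}A$, which follows from $A(A^\ast A + \alpha_k I) = (A A^\ast + \alpha_k I)A$ after multiplying on both sides by the appropriate inverses; substituting it and simplifying $I - (AA^\ast+\alpha_k I)^{-1}AA^\ast = \alpha_k(AA^\ast+\alpha_k I)^{-1}$ gives the claim. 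Applying this to $r_k^\delta$ yields
\begin{equation*}
y^\delta - F(x_k^\delta) - F'(x_k^\delta)(x_{k+1}^\delta - x_k^\delta) = \alpha_k (F'(x_k^\delta)F'(x_k^\delta)^\ast + \alpha_k I)^{-1}(y^\delta - F(x_k^\delta)).
\end{equation*}

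\medskip

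Taking the norm of both sides, the right-hand side is exactly $\alpha_k \|(F'(x_k^\delta)F'(x_k^\delta)^\ast + \alpha_k I)^{-1}(y^\delta - F(x_k^\delta))\|_Y$, which by the defining relation \eqref{MDP} for Morozov's discrepancy principle equals $q\|y^\delta - F(x_k^\delta)\|_Y$. This is precisely the right-hand side of \eqref{MDP'}, completing the proof. The hypothesis \eqref{unique;alpha} enters only to guarantee that $\alpha_k$ is well defined via \eqref{MDP}; once $\alpha_k$ exists, the identity is purely algebraic and requires no further use of that assumption.

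\medskip

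I expect the only genuine obstacle to be verifying the commuting-resolvent identity cleanly in the operator setting, in particular making sure the inverses $(A^\ast A + \alpha_k I)^{-1}$ and $(AA^\ast + \alpha_k I)^{-1}$ exist and are bounded; this is immediate since $\alpha_k > 0$ makes both operators boundedly invertible (they are self-adjoint and bounded below by $\alpha_k I$). Everything else is bookkeeping, so this lemma is essentially a translation of \cite[Lemma~4.9 or the surrounding discussion]{KNS} into the present notation.
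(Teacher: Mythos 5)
Your proof is correct and follows essentially the same route as the paper: both hinge on the commuting-resolvent identity $F'(x_k^\delta)(F'(x_k^\delta)^\ast F'(x_k^\delta)+\alpha_k I)^{-1}=(F'(x_k^\delta)F'(x_k^\delta)^\ast+\alpha_k I)^{-1}F'(x_k^\delta)$ to derive the key representation $y^\delta-F(x_k^\delta)-F'(x_k^\delta)(x_{k+1}^\delta-x_k^\delta)=\alpha_k(F'(x_k^\delta)F'(x_k^\delta)^\ast+\alpha_k I)^{-1}(y^\delta-F(x_k^\delta))$, after which \eqref{MDP} gives the conclusion. Your packaging of the computation as the filter-factor identity $I-A(A^\ast A+\alpha_k I)^{-1}A^\ast=\alpha_k(AA^\ast+\alpha_k I)^{-1}$ is just a tidier arrangement of the paper's multiplication-and-inversion steps, and your closing remark that \eqref{unique;alpha} is used only to guarantee the existence of $\alpha_k$ matches the paper's usage exactly.
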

\begin{proof}
Multiplying
\[
F'(x_{k}^{\delta})(F'(x_{k}^{\delta})^{\ast}F'(x_{k}^{\delta})+\alpha_{k}I)=(F'(x_{k}^{\delta})F'(x_{k}^{\delta})^{\ast}+\alpha_{k}I)F'(x_{k}^{\delta})
\]
by $(F'(x_{k}^{\delta})F'(x_{k}^{\delta})^{\ast}+\alpha_{k}I)^{-1}$ from the left side and $(F'(x_{k}^{\delta})^{\ast}F'(x_{k}^{\delta})+\alpha_{k}I)^{-1}$ from the right side, we have
\begin{equation} \label{prep;KNS;4.9;1}
(F'(x_{k}^{\delta})F'(x_{k}^{\delta})^{\ast}+\alpha_{k}I)^{-1}F'(x_{k}^{\delta})=F'(x_{k}^{\delta})(F'(x_{k}^{\delta})^{\ast}F'(x_{k}^{\delta})+\alpha_{k}I)^{-1}.
\end{equation}
Transposing $x_{k}^{\delta}$ to the left side of \eqref{LM}, multiplying it by $(F'(x_{k}^{\delta})F'(x_{k}^{\delta})^{\ast}+\alpha_{k}I)F'(x_{k}^{\delta})$ from the left side and using \eqref{prep;KNS;4.9;1} yield
\begin{align} \label{prep;KNS;4.9;2}
&(F'(x_{k}^{\delta})F'(x_{k}^{\delta})^{\ast}+\alpha_{k}I)F'(x_{k}^{\delta})(x_{k+1}^{\delta}-x_{k}^{\delta}) \nonumber \\
&\hspace{0.1in}=(F'(x_{k}^{\delta})F'(x_{k}^{\delta})^{\ast}+\alpha_{k}I)F'(x_{k}^{\delta})(F'(x_{k}^{\delta})^{\ast}F'(x_{k}^{\delta})+\alpha_{k}I)^{-1}F'(x_{k}^{\delta})^{\ast}(y^{\delta}-F(x_{k}^{\delta}))\nonumber \\
&\hspace{0.1in}=F'(x_{k}^{\delta})F'(x_{k}^{\delta})^{\ast}(y^{\delta}-F(x_{k}^{\delta})).
\end{align}
Using \eqref{prep;KNS;4.9;2}, we have
\begin{align} \label{prep;KNS;4.9;3}
&(F'(x_{k}^{\delta})F'(x_{k}^{\delta})^{\ast}+\alpha_{k}I)(y^{\delta}-F(x_{k}^{\delta})-F'(x_{k}^{\delta})(x_{k+1}^{\delta}-x_{k}^{\delta})) \nonumber \\
&\hspace{0.1in}=(F'(x_{k}^{\delta})F'(x_{k}^{\delta})^{\ast}+\alpha_{k}I)(y^{\delta}-F(x_{k}^{\delta})) \nonumber \\
&\hspace{0.3in}-(F'(x_{k}^{\delta})F'(x_{k}^{\delta})^{\ast}+\alpha_{k}I)F'(x_{k}^{\delta})(x_{k+1}^{\delta}-x_{k}^{\delta})) \nonumber \\
&\hspace{0.1in}=(F'(x_{k}^{\delta})F'(x_{k}^{\delta})^{\ast}+\alpha_{k}I)(y^{\delta}-F(x_{k}^{\delta}))-F'(x_{k}^{\delta})F'(x_{k}^{\delta})^{\ast}(y^{\delta}-F(x_{k}^{\delta})) \nonumber \\
&\hspace{0.1in}=\alpha_{k}(y^{\delta}-F(x_{k}^{\delta})).
\end{align}
Multiplying \eqref{prep;KNS;4.9;3} by $(F'(x_{k}^{\delta})F'(x_{k}^{\delta})^{\ast}+\alpha_{k}I)^{-1}$ from the left side yields
\begin{equation} \label{KNS;4.9}
y^{\delta}-F(x_{k}^{\delta})-F'(x_{k}^{\delta})(x_{k+1}^{\delta}-x_{k}^{\delta})=\alpha_{k}(F'(x_{k}^{\delta})F'(x_{k}^{\delta})^{\ast}+\alpha_{k}I)^{-1}(y^{\delta}-F(x_{k}^{\delta})).
\end{equation}
Then, \eqref{MDP'} follows from \eqref{MDP} and \eqref{KNS;4.9}.
\end{proof}

\begin{lemma}[\textup{monotonicity of error, \cite[Proposition 4.1]{KNS}}] \label{prop:KNS;4.5}
Let $0<q<1$. We assume that \eqref{NOE} has a solution~$x^{\dagger}$ and that \eqref{unique;alpha} holds so that $\alpha_{k}$ can be defined via \eqref{MDP}. We define $x_{k}^{\delta}$ as \eqref{LM}. Then, the following estimate holds:
\begin{equation} \label{KNS;4.5}
\|x_{k}^{\delta}-x^{\dagger}\|_{X}^{2}-\|x_{k+1}^{\delta}-x^{\dagger}\|_{X}^{2}>\|x_{k+1}^{\delta}-x_{k}^{\delta}\|_{X}^{2}.
\end{equation}
\end{lemma}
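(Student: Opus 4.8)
The plan is to reduce \eqref{KNS;4.5} to a comparison between two residual quantities that are already controlled by Lemma~\ref{lem:KNS;4.9} and the hypothesis \eqref{unique;alpha}. Abbreviating the LM step as $s_k := x_{k+1}^\delta - x_k^\delta$, I would first expand
\[
\|x_k^\delta - x^\dagger\|_X^2 - \|x_{k+1}^\delta - x^\dagger\|_X^2 = 2\langle x^\dagger - x_k^\delta,\, s_k\rangle_X - \|s_k\|_X^2 ,
\]
so that \eqref{KNS;4.5} becomes equivalent to the single inequality $\langle x^\dagger - x_k^\delta,\, s_k\rangle_X > \|s_k\|_X^2$. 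This is the genuine target; everything afterwards is a matter of rewriting both of its sides in the data space $Y$.

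Next I would use the normal equation defining the step. From \eqref{LM}, $s_k$ satisfies $(F'(x_k^\delta)^\ast F'(x_k^\delta)+\alpha_k I)s_k = F'(x_k^\delta)^\ast(y^\delta - F(x_k^\delta))$, which rearranges to $\alpha_k s_k = F'(x_k^\delta)^\ast u_k$, where $u_k := y^\delta - F(x_k^\delta) - F'(x_k^\delta)s_k$ is precisely the residual of Lemma~\ref{lem:KNS;4.9}. Taking $X$-inner products against $x^\dagger - x_k^\delta$ and against $s_k$, and moving $F'(x_k^\delta)^\ast$ across by the adjoint relation, I would obtain
\[
\alpha_k\langle x^\dagger - x_k^\delta,\, s_k\rangle_X = \langle F'(x_k^\delta)(x^\dagger - x_k^\delta),\, u_k\rangle_Y, \qquad \alpha_k\|s_k\|_X^2 = \langle y^\delta - F(x_k^\delta),\, u_k\rangle_Y - \|u_k\|_Y^2 ,
\]
where the second identity also uses $F'(x_k^\delta)s_k = (y^\delta - F(x_k^\delta)) - u_k$.

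The decisive step is then to introduce the linearization residual $w_k := y^\delta - F(x_k^\delta) - F'(x_k^\delta)(x^\dagger - x_k^\delta)$, so that $F'(x_k^\delta)(x^\dagger - x_k^\delta) = (y^\delta - F(x_k^\delta)) - w_k$. Substituting this into the first identity and multiplying the target inequality through by $\alpha_k>0$, the two copies of $\langle y^\delta - F(x_k^\delta),\, u_k\rangle_Y$ cancel and the whole ill-posed structure disappears: the target collapses to the clean scalar inequality $\|u_k\|_Y^2 > \langle w_k,\, u_k\rangle_Y$. I would close the argument by Cauchy--Schwarz, $\langle w_k,\, u_k\rangle_Y \le \|w_k\|_Y\|u_k\|_Y$, combined with the two a priori bounds already at hand: Lemma~\ref{lem:KNS;4.9} gives $\|u_k\|_Y = q\|y^\delta - F(x_k^\delta)\|_Y$, while \eqref{unique;alpha} gives $\|w_k\|_Y \le (q/\omega)\|y^\delta - F(x_k^\delta)\|_Y$. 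Since $\omega>1$, for $y^\delta - F(x_k^\delta)\neq 0$ (the only nontrivial case, in which $u_k\neq 0$) this yields $\|w_k\|_Y < \|u_k\|_Y$, whence $\langle w_k,\, u_k\rangle_Y \le \|w_k\|_Y\|u_k\|_Y < \|u_k\|_Y^2$, as required.

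I expect the only delicate point to be the bookkeeping in this cancellation: one must keep the \emph{same} residual $u_k$ on both sides and check that $\alpha_k$ factors out cleanly, so that the ill-posed operator $(F'(x_k^\delta)^\ast F'(x_k^\delta)+\alpha_k I)^{-1}$ never has to be estimated directly. It is worth emphasizing that the strictness of the conclusion is driven entirely by the strict inequality $\omega>1$ in \eqref{unique;alpha}; with $\omega=1$ the same computation would give only the non-strict estimate.
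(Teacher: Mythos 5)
Your proof is correct and follows essentially the same route as the paper's: both arguments reduce \eqref{KNS;4.5} to the scalar inequality $\|u_k\|_{Y}^{2}>(w_k,u_k)_{Y}$ via the normal equation $\alpha_k s_k=F'(x_k^{\delta})^{\ast}u_k$ (equivalently \eqref{KNS;4.9}), and close with Cauchy--Schwarz together with \eqref{MDP'}, \eqref{unique;alpha} and $\omega>1$. The only cosmetic difference is that the paper centers the cross term at $x_{k+1}^{\delta}$, proving $(x_{k+1}^{\delta}-x_{k}^{\delta},\,x_{k+1}^{\delta}-x^{\dagger})_{X}<0$, whereas you center it at $x_{k}^{\delta}$ and move $\|s_k\|_{X}^{2}$ to the other side; your explicit observation that strictness requires $y^{\delta}\neq F(x_{k}^{\delta})$ is left implicit in the paper.
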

\begin{proof}
Since we have
\begin{equation} \label{prep;KNS;4.5;1}
\|x_{k+1}^{\delta}-x^{\dagger}\|_{X}^{2}-\|x_{k}^{\delta}-x^{\dagger}\|_{X}^{2}=-\|x_{k+1}^{\delta}-x_{k}^{\delta}\|_{X}^{2}+2(x_{k+1}^{\delta}-x_{k}^{\delta}, x_{k+1}^{\delta}-x^{\dagger})_{X},
\end{equation}
it suffices to examine the second term on the right-hand side of \eqref{prep;KNS;4.5;1}.
We here note that it follows from analysis similar to the derivation of \eqref{prep;KNS;4.9;1} that
\begin{equation} \label{prep;ast}
(F'(x_{k}^{\delta})^{\ast}F'(x_{k}^{\delta})+\alpha_{k}I)^{-1}F'(x_{k}^{\delta})^{\ast}=F'(x_{k}^{\delta})^{\ast}(F'(x_{k}^{\delta})F'(x_{k}^{\delta})^{\ast}+\alpha_{k}I)^{-1}.
\end{equation}
It follows from \eqref{LM}, \eqref{prep;ast} and \eqref{KNS;4.9} that
\begin{align} \label{prep;KNS;4.5;1;2nd-term-1}
&(x_{k+1}^{\delta}-x_{k}^{\delta}, x_{k+1}^{\delta}-x^{\dagger})_{X} \nonumber \\
&\hspace{0.1in}=((F'(x_{k}^{\delta})^{\ast}F'(x_{k}^{\delta})+\alpha_{k}I)^{-1}F'(x_{k}^{\delta})^{\ast}(y^{\delta}-F(x_{k}^{\delta})), x_{k+1}^{\delta}-x^{\dagger})_{X} \nonumber \\
&\hspace{0.1in}=(F'(x_{k}^{\delta})^{\ast}(F'(x_{k}^{\delta})F'(x_{k}^{\delta})^{\ast}+\alpha_{k}I)^{-1}(y^{\delta}-F(x_{k}^{\delta})), x_{k+1}^{\delta}-x^{\dagger})_{X} \nonumber \\
&\hspace{0.1in}=((F'(x_{k}^{\delta})F'(x_{k}^{\delta})^{\ast}+\alpha_{k}I)^{-1}(y^{\delta}-F(x_{k}^{\delta})), F'(x_{k}^{\delta})(x_{k+1}^{\delta}-x^{\dagger}))_{Y} \nonumber \\
&\hspace{0.1in}=\frac{1}{\alpha_{k}}(y^{\delta}-F(x_{k}^{\delta})-F'(x_{k}^{\delta})(x_{k+1}^{\delta}-x_{k}^{\delta}), F'(x_{k}^{\delta})(x_{k+1}^{\delta}-x^{\dagger}))_{Y}.
\end{align}
Moreover, on the inner product of \eqref{prep;KNS;4.5;1;2nd-term-1}, we have
\begin{align} \label{prep;KNS;4.5;1;2nd-term-2}
&(y^{\delta}-F(x_{k}^{\delta})-F'(x_{k}^{\delta})(x_{k+1}^{\delta}-x_{k}^{\delta}), F'(x_{k}^{\delta})(x_{k+1}^{\delta}-x^{\dagger}))_{Y} \nonumber \\
&\hspace{0.1in}=(y^{\delta}-F(x_{k}^{\delta})-F'(x_{k}^{\delta})(x_{k+1}^{\delta}-x_{k}^{\delta}), \nonumber \\
&\hspace{0.3in}-(y^{\delta}-F(x_{k}^{\delta})-F'(x_{k}^{\delta})(x_{k+1}^{\delta}-x_{k}^{\delta}))+y^{\delta}-F(x_{k}^{\delta})-F'(x_{k}^{\delta})(x^{\dagger}-x_{k}^{\delta}))_{Y} \nonumber \\
&\hspace{0.1in}=-\|y^{\delta}-F(x_{k}^{\delta})-F'(x_{k}^{\delta})(x_{k+1}^{\delta}-x_{k}^{\delta})\|_{Y}^{2} \nonumber \\
&\hspace{0.3in}+(y^{\delta}-F(x_{k}^{\delta})-F'(x_{k}^{\delta})(x_{k+1}^{\delta}-x_{k}^{\delta}), y^{\delta}-F(x_{k}^{\delta})-F'(x_{k}^{\delta})(x^{\dagger}-x_{k}^{\delta}))_{Y} \nonumber \\
&\hspace{0.1in}\leq-\|y^{\delta}-F(x_{k}^{\delta})-F'(x_{k}^{\delta})(x_{k+1}^{\delta}-x_{k}^{\delta})\|_{Y}^{2} \nonumber \\
&\hspace{0.3in}+\|y^{\delta}-F(x_{k}^{\delta})-F'(x_{k}^{\delta})(x_{k+1}^{\delta}-x_{k}^{\delta})\|_{Y}\|y^{\delta}-F(x_{k}^{\delta})-F'(x_{k}^{\delta})(x^{\dagger}-x_{k}^{\delta})\|_{Y} \nonumber \\
&\hspace{0.1in}=-\|y^{\delta}-F(x_{k}^{\delta})-F'(x_{k}^{\delta})(x_{k+1}^{\delta}-x_{k}^{\delta})\|_{Y} \nonumber \\
&\hspace{0.5in}\times(\|y^{\delta}-F(x_{k}^{\delta})-F'(x_{k}^{\delta})(x_{k+1}^{\delta}-x_{k}^{\delta})\|_{Y} \nonumber \\
&\hspace{1.2in}-\|y^{\delta}-F(x_{k}^{\delta})-F'(x_{k}^{\delta})(x^{\dagger}-x_{k}^{\delta})\|_{Y}).
\end{align}
It follows from \eqref{unique;alpha}, \eqref{MDP'} and $\omega>1$ that
\begin{align} \label{prep;KNS;4.5;1;2nd-term-3}
&\|y^{\delta}-F(x_{k}^{\delta})-F'(x_{k}^{\delta})(x^{\dagger}-x_{k}^{\delta})\|_{Y} \nonumber \\
&\hspace{0.1in}\leq\frac{q}{\omega}\|y^{\delta}-F(x_{k}^{\delta})\|_{Y}=\frac{1}{\omega}\|y^{\delta}-F(x_{k}^{\delta})-F'(x_{k}^{\delta})(x_{k+1}^{\delta}-x_{k}^{\delta})\|_{Y} \nonumber \\
&\hspace{0.1in}<\|y^{\delta}-F(x_{k}^{\delta})-F'(x_{k}^{\delta})(x_{k+1}^{\delta}-x_{k}^{\delta})\|_{Y}.
\end{align}
Combining \eqref{prep;KNS;4.5;1;2nd-term-1}, \eqref{prep;KNS;4.5;1;2nd-term-2} and \eqref{prep;KNS;4.5;1;2nd-term-3} implies $(x_{k+1}^{\delta}-x_{k}^{\delta}, x_{k+1}^{\delta}-x^{\dagger})_{X}<0$, which in turn yields \eqref{KNS;4.5}.
\end{proof}

Finally, we introduce the well-known inequality, see the proof of \cite[Proposition 4.1]{KNS}, for instance.
\begin{lemma} \label{lem:KNS;Prop;4.1}
Let $0<q<1$. We assume that \eqref{unique;alpha} holds so that $\alpha_{k}$ can be defined via \eqref{MDP}. We define $x_{k}^{\delta}$ as \eqref{LM}. Then, we have
\begin{equation} \label{alpha_k}
\alpha_{k}\leq\frac{q}{1-q}\|F'(x_{k}^{\delta})\|_{\mathcal{L}(X, Y)}^{2}.
\end{equation}
\end{lemma}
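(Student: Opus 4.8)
The plan is to work directly from the defining relation \eqref{MDP} and reduce the claim to a single scalar inequality by estimating the norm of the resolvent applied to the residual. To lighten the notation I would abbreviate $A:=F'(x_k^\delta)$, $B:=AA^\ast$ and $r:=y^\delta-F(x_k^\delta)$, and write $\alpha:=\alpha_k$. Here $B$ is a bounded, self-adjoint, positive semidefinite operator on $Y$ whose operator norm satisfies $\|B\|_{\mathcal L(Y,Y)}=\|A\|_{\mathcal L(X,Y)}^{2}$, so it suffices to prove $\alpha\le \frac{q}{1-q}\|B\|_{\mathcal L(Y,Y)}$. In this notation the discrepancy condition \eqref{MDP} reads $\alpha\,\|(B+\alpha I)^{-1}r\|_Y=q\,\|r\|_Y$.

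The key step is a lower bound for $\|(B+\alpha I)^{-1}r\|_Y$. Setting $s:=(B+\alpha I)^{-1}r$, so that $r=(B+\alpha I)s=Bs+\alpha s$, the triangle inequality together with $\|Bs\|_Y\le\|B\|_{\mathcal L(Y,Y)}\|s\|_Y$ gives $\|r\|_Y\le(\|B\|_{\mathcal L(Y,Y)}+\alpha)\|s\|_Y$. Hence $\|(B+\alpha I)^{-1}r\|_Y\ge \|r\|_Y/(\|B\|_{\mathcal L(Y,Y)}+\alpha)$. Substituting this into \eqref{MDP} yields $q\,\|r\|_Y\ge \alpha\,\|r\|_Y/(\|B\|_{\mathcal L(Y,Y)}+\alpha)$.

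If $\|r\|_Y=0$ the estimate is trivial, so I may assume $\|r\|_Y>0$ and cancel it, obtaining $q(\|B\|_{\mathcal L(Y,Y)}+\alpha)\ge\alpha$, i.e.\ $q\|B\|_{\mathcal L(Y,Y)}\ge(1-q)\alpha$. Since $0<q<1$, dividing by $1-q$ gives $\alpha\le\frac{q}{1-q}\|B\|_{\mathcal L(Y,Y)}=\frac{q}{1-q}\|A\|_{\mathcal L(X,Y)}^{2}$, which is precisely \eqref{alpha_k}.

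I do not expect any serious obstacle here: the only points requiring care are recognizing the clean resolvent estimate $\|(B+\alpha I)^{-1}r\|_Y\ge\|r\|_Y/(\|B\|_{\mathcal L(Y,Y)}+\alpha)$ (equivalently, reading off the same bound from the spectral calculus of $B$, where the multiplier $\alpha/(\lambda+\alpha)$ attains its minimum over the spectrum $[0,\|B\|_{\mathcal L(Y,Y)}]$ at $\lambda=\|B\|_{\mathcal L(Y,Y)}$) and justifying the operator-norm identity $\|AA^\ast\|_{\mathcal L(Y,Y)}=\|A\|_{\mathcal L(X,Y)}^{2}$. Both are standard Hilbert-space facts, so the argument is essentially a one-line manipulation of \eqref{MDP}.
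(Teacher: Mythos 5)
Your proof is correct and takes essentially the same approach as the paper: your resolvent lower bound $\|(B+\alpha I)^{-1}r\|_{Y}\geq\|r\|_{Y}/(\|B\|_{\mathcal{L}(Y,Y)}+\alpha)$ is exactly the paper's estimate \eqref{prep;alpha_k;1}, and combining it with \eqref{MDP} to get $q(\|B\|_{\mathcal{L}(Y,Y)}+\alpha)\geq\alpha$ is precisely \eqref{prep;alpha_k;2}. Your explicit handling of the case $\|r\|_{Y}=0$ and the remark on $\|AA^{\ast}\|_{\mathcal{L}(Y,Y)}=\|A\|_{\mathcal{L}(X,Y)}^{2}$ are only minor polish on the identical argument.
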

\begin{proof}
We first remark that we have
\begin{align} \label{prep;alpha_k;1}
&\|y^{\delta}-F(x_{k}^{\delta})\|_{Y} \nonumber \\
&\hspace{0.1in}=\|(F'(x_{k}^{\delta})F'(x_{k}^{\delta})^{\ast}+\alpha_{k}I)(F'(x_{k}^{\delta})F'(x_{k}^{\delta})^{\ast}+\alpha_{k}I)^{-1}(y^{\delta}-F(x_{k}^{\delta}))\|_{Y} \nonumber \\
&\hspace{0.1in}\leq(\|F'(x_{k}^{\delta})\|_{\mathcal{L}(X, Y)}^{2}+\alpha_{k})\|(F'(x_{k}^{\delta})F'(x_{k}^{\delta})^{\ast}+\alpha_{k}I)^{-1}(y^{\delta}-F(x_{k}^{\delta}))\|_{Y}.
\end{align}
Using \eqref{MDP} and \eqref{prep;alpha_k;1}, we obtain
\begin{align} \label{prep;alpha_k;2}
q\|y^{\delta}-F(x_{k}^{\delta})\|_{Y}&=\alpha_{k}\|(F'(x_{k}^{\delta})F'(x_{k}^{\delta})^{\ast}+\alpha_{k}I)^{-1}(y^{\delta}-F(x_{k}^{\delta}))\|_{Y} \nonumber \\
&\geq\frac{\alpha_{k}}{\|F'(x_{k}^{\delta})\|_{\mathcal{L}(X, Y)}^{2}+\alpha_{k}}\|y^{\delta}-F(x_{k}^{\delta})\|_{Y}.
\end{align}
Then, estimate~\eqref{alpha_k} immediately follows from \eqref{prep;alpha_k;2}.
\end{proof}

\section{Convergence for exact data} \label{section:exact}
In this section, we study the convergence for exact data. For the operator~$F$, we assume the following.
\begin{assumption} \label{asm;Holder}
Let $B=\{x\in X \mid \frac{1}{2}\|x-x_{0}\|_{X}^{2}\leq\rho'\}\subset\mathcal{D}(F)$.
\begin{enumerate}
\renewcommand{\labelenumi}{(\alph{enumi})}
\setlength{\leftskip}{-0.1in}
\setlength{\itemsep}{-4pt}
\item $F$ is Fr\'{e}chet derivative in $B$, and its derivative $F'(\cdot)$ is Lipschitz continuous with a Lipschitz constant $L>0$ such that
\begin{equation} \label{F'-Lip}
\|F'(x)-F'(\widetilde{x})\|_{\mathcal{L}(X, Y)}\leq L\|x-\widetilde{x}\|_{X},\hspace{0.2in}x, \widetilde{x}\in B.
\end{equation}
\item There exists $\widehat{L}>0$ such that
\begin{equation} \label{F'-bounded}
\|F'(x)\|_{\mathcal{L}(X, Y)}\leq\widehat{L},\hspace{0.2in}x\in B.
\end{equation}
\item Equation~\eqref{NOE} has the uniform H\"{o}lder stability estimate given as
\begin{equation} \label{F;inv-Holder}
\frac{1}{\sqrt{2}}\|x-\widetilde{x}\|_{X}\leq C_{F}\|F(x)-F(\widetilde{x})\|_{Y}^{\frac{1+\varepsilon}{2}},\hspace{0.2in}x, \widetilde{x}\in B
\end{equation}
for some $C_{F}>0$ and $\varepsilon\in(0, 1]$.
\end{enumerate}
\end{assumption}
\begin{remark} \label{rmk:TCC;Holder}
It follows from Assumption~\ref{asm;Holder} that the tangential cone condition~\cite{KSS}
\begin{equation} \label{TCC}
\|F(x)-F(\widetilde{x})-F'(x)(x-\widetilde{x})\|_{Y}\leq\eta\|F(x)-F(\widetilde{x})\|_{Y}\hspace{0.2in}x, \widetilde{x}\in B
\end{equation}
for some $0<\eta<1$ if $\rho'>0$ is sufficiently small. Indeed, applying the fundamental theorem of calculus for the Fr\'{e}chet derivative~\cite[Lemma\,A.63]{Kirsch}, \eqref{F'-Lip} and \eqref{F;inv-Holder} yields
\begin{align*} 
\|F(x)-F(\widetilde{x})-F'(\widetilde{x})(x-\widetilde{x})\|_{Y}&\leq\frac{L}{2}\|x-\widetilde{x}\|_{X}^{2} \\
&\leq\frac{L}{2}\|x-\widetilde{x}\|_{X}^{\frac{2\varepsilon}{1+\varepsilon}}(\sqrt{2}C_{F})^{\frac{2}{1+\varepsilon}}\|F(x)-F(\widetilde{x})\|_{Y} \\
&\leq\frac{L}{2}(2\sqrt{2\rho'})^{\frac{2\varepsilon}{1+\varepsilon}}(\sqrt{2}C_{F})^{\frac{2}{1+\varepsilon}}\|F(x)-F(\widetilde{x})\|_{Y}.
\end{align*}
\end{remark}
We are now ready to state our main result for exact data explicitly. It says that the sequence~$\{x_{k}\}_{k=0}^{\infty}$ obtained by the LM-method for the exact data~$y$ converges to a solution~$x^{\dagger}$ with a convergence rate.
\begin{theorem} \label{thm:exact;Hilbert}
We assume that there exists a solution $x^{\dagger}$ to the equation \eqref{NOE} and that Assumption~\ref{asm;Holder} holds. Let $q$ satisfy
\begin{equation} \label{q}
q(1-q)<2\widehat{L}^{2}C_{F}^{\frac{4}{1+\varepsilon}}
\end{equation}
and let $\rho$ satisfy
\begin{equation} \label{rho}
\rho=\frac{1}{2\widehat{L}^{2}}\left(\frac{q}{2LC_{F}^{2}}\right)^{\frac{2}{\varepsilon}}<\rho'. 
\end{equation}
If
\begin{equation} \label{x0}
\frac{1}{2}\|x_{0}-x^{\dagger}\|_{X}^{2}\leq\rho,
\end{equation}
then the iterates satisfy
\begin{equation} \label{xk}
\frac{1}{2}\|x_{k}-x^{\dagger}\|_{X}^{2}\leq\rho,\hspace{0.3in}k\in\mathbb{N}
\end{equation}
and $x_{k}\to x^{\dagger}$ as $k\to\infty$. Moreover, let
\begin{equation} \label{c}
c=\frac{q(1-q)}{2\widehat{L}^{2}C_{F}^{\frac{4}{1+\varepsilon}}}.
\end{equation}
Then, it follows from \eqref{q} that $0<c<1$. The convergence rate is given by 
\begin{equation} \label{conv-rate;L}
\frac{1}{2}\|x_{k}-x^{\dagger}\|_{X}^{2}\leq\rho(1-c)^{k}
\end{equation}
if $\varepsilon=1$. For $\varepsilon\in(0, 1)$, the convergence rate is given by
\begin{equation} \label{conv-rate;H}
\frac{1}{2}\|x_{k}-x^{\dagger}\|_{X}^{2}\leq\left(ck\frac{1-\varepsilon}{1+\varepsilon}+\rho^{-\frac{1-\varepsilon}{1+\varepsilon}}\right)^{-\frac{1+\varepsilon}{1-\varepsilon}}.
\end{equation}
\end{theorem}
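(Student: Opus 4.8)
The plan is to collapse the whole statement into a single scalar recursion for the squared error $a_k:=\tfrac12\|x_k-x^{\dagger}\|_X^2$, namely
$a_{k+1}\le a_k-c\,a_k^{2/(1+\varepsilon)}$ with $c$ as in \eqref{c}, and then read off the invariance \eqref{xk}, the convergence $x_k\to x^{\dagger}$, and the rates \eqref{conv-rate;L}--\eqref{conv-rate;H} from it. Since the data are exact I drop the superscript~$\delta$, set $y=F(x^{\dagger})$, and abbreviate $r_k:=y-F(x_k)$, $t_k:=y-F(x_k)-F'(x_k)(x^{\dagger}-x_k)$, and $s_k:=F'(x_k)(x_{k+1}-x_k)$.

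\textbf{The decisive pointwise estimate (expected crux).} First I would record, for any iterate with $x_k\in B$ and $a_k\le\rho$, the three bounds that drive everything. Combining the fundamental theorem of calculus with \eqref{F'-Lip} (exactly as in Remark~\ref{rmk:TCC;Holder}) gives $\|t_k\|_Y\le\tfrac{L}{2}\|x_k-x^{\dagger}\|_X^2$, and squaring \eqref{F;inv-Holder} converts this into $\|t_k\|_Y\le L C_F^2\|r_k\|_Y^{1+\varepsilon}=\beta_k\|r_k\|_Y$ with $\beta_k:=LC_F^2\|r_k\|_Y^{\varepsilon}$. Next, the mean value inequality and \eqref{F'-bounded} yield the upper bound $\|r_k\|_Y\le\widehat{L}\|x_k-x^{\dagger}\|_X\le\widehat{L}\sqrt{2\rho}$, and the point is that the definition \eqref{rho} is calibrated so that $\widehat{L}\sqrt{2\rho}=(q/(2LC_F^2))^{1/\varepsilon}$ exactly; hence $\beta_k\le q/2$, i.e. $\|t_k\|_Y\le\tfrac{q}{2}\|r_k\|_Y$. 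This is precisely \eqref{unique;alpha} with $\omega=2>1$, so $\alpha_k$ is well defined through \eqref{MDP} and Lemmas~\ref{lem:KNS;4.9}--\ref{lem:KNS;Prop;4.1} become available. I expect this to be the heart of the matter: both the clean constant $c$ and the applicability of the whole Section~\ref{section:LM} machinery hinge on the exact form of \eqref{rho} forcing $\beta_k\le q/2$.

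\textbf{The recursion.} Starting from the identity in the proof of Lemma~\ref{prop:KNS;4.5}, which gives $a_k-a_{k+1}\ge-(x_{k+1}-x_k,x_{k+1}-x^{\dagger})_X$, and re-using the computations \eqref{prep;KNS;4.5;1;2nd-term-1}--\eqref{prep;KNS;4.5;1;2nd-term-2} (now with exact data), I obtain $a_k-a_{k+1}\ge\tfrac{1}{\alpha_k}(\|r_k-s_k\|_Y^2-(r_k-s_k,t_k)_Y)$. With Cauchy--Schwarz and the alternative discrepancy identity \eqref{MDP'} (that is, $\|r_k-s_k\|_Y=q\|r_k\|_Y$) this becomes $a_k-a_{k+1}\ge\tfrac{q\|r_k\|_Y}{\alpha_k}(q\|r_k\|_Y-\|t_k\|_Y)$. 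Inserting $\|t_k\|_Y\le\beta_k\|r_k\|_Y\le\tfrac{q}{2}\|r_k\|_Y$, the bound \eqref{alpha_k} together with \eqref{F'-bounded} (so $\alpha_k\le\tfrac{q}{1-q}\widehat{L}^2$), and the lower H\"older bound $\|r_k\|_Y^2\ge(a_k/C_F^2)^{2/(1+\varepsilon)}$ obtained by squaring \eqref{F;inv-Holder}, I land exactly on $a_k-a_{k+1}\ge c\,a_k^{2/(1+\varepsilon)}$ with $c$ as in \eqref{c}; condition \eqref{q} then gives $0<c<1$.

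\textbf{Invariance, convergence and rates.} I would close the induction for \eqref{xk}: assuming $a_k\le\rho$, the previous two paragraphs apply, the monotonicity Lemma~\ref{prop:KNS;4.5} gives $a_{k+1}<a_k\le\rho$, and the triangle inequality together with the decrease of $\|x_k-x^{\dagger}\|_X$ keeps $x_k$ (and, by convexity of $B$, the segment joining it to $x^{\dagger}$) inside $B$, so the hypotheses used above persist; the base case is \eqref{x0}. Convergence $x_k\to x^{\dagger}$ follows since $a_k$ decreases to a limit $a_\infty$ satisfying $c\,a_\infty^{2/(1+\varepsilon)}\le0$, forcing $a_\infty=0$. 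For the rates I unwind the recursion: when $\varepsilon=1$ it reads $a_{k+1}\le(1-c)a_k$, yielding \eqref{conv-rate;L}; when $\varepsilon\in(0,1)$, I set $p:=2/(1+\varepsilon)\in(1,2)$ and use convexity of $a\mapsto a^{1-p}$ to get $a_{k+1}^{1-p}\ge a_k^{1-p}+c(p-1)$, which telescopes to $a_k^{1-p}\ge\rho^{1-p}+c(p-1)k$; rearranging and using $a_0\le\rho$ gives exactly \eqref{conv-rate;H}. The only genuinely fussy bookkeeping I anticipate, besides the calibration in the second paragraph, is verifying the in-$B$ claim quantitatively, since $\|x_k-x_0\|_X\le2\|x_0-x^{\dagger}\|_X$ only controls $\tfrac12\|x_k-x_0\|_X^2$ up to a constant multiple of $\rho$, so one must use $\rho<\rho'$ with enough room.
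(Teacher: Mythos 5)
Your proposal is correct and follows essentially the same route as the paper's proof: the same calibration $\widehat{L}\sqrt{2\rho}=(q/(2LC_F^2))^{1/\varepsilon}$ forcing \eqref{unique;alpha} with $\omega=2$, the same induction on $\gamma_k\le\rho$ coupled with well-definedness of $\alpha_k$, the same scalar recursion $\gamma_{k+1}\le\gamma_k-c\,\gamma_k^{2/(1+\varepsilon)}$ (your split via the Lemma~\ref{prop:KNS;4.5} identity versus the paper's expansion against $x_k-x^{\dagger}$ is algebraically equivalent), and an equivalent telescoping for \eqref{conv-rate;H} (your tangent-line/convexity step replaces the paper's $(1-s)^{-\beta}\ge1+\beta s$). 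The ``fussy bookkeeping'' you flag at the end is real but shared with the paper itself, which asserts $x_m\in B$ from $\gamma_m\le\rho<\rho'$ although the triangle inequality only gives $\tfrac12\|x_m-x_0\|_X^2\le4\rho$, so strictly one should require $4\rho\le\rho'$ in \eqref{rho}.
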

\begin{proof}
We first see that the parameter~$\alpha_{0}$ is determined uniquely. It suffices to confirm that \eqref{unique;alpha} is satisfied. Applying \cite[Lemma\,A.63]{Kirsch} and using \eqref{F;inv-Holder} yield
\begin{equation} \label{prep;unique;alpha;exact;1}
\|y-F(x_{0})-F'(x_{0})(x^{\dagger}-x_{0})\|_{Y}\leq\frac{L}{2}\|x_{0}-x^{\dagger}\|_{X}^{2}\leq LC_{F}^{2}\|F(x_{0})-y\|_{Y}^{1+\varepsilon}.
\end{equation}
By the mean value inequality, \eqref{F'-bounded}, \eqref{x0} and \eqref{rho}, we have
\begin{equation} \label{prep;unique;alpha;exact;2}
\|F(x_{0})-y\|_{Y}\leq\widehat{L}\|x_{0}-x^{\dagger}\|_{X}\leq\sqrt{2\rho}\widehat{L}=\left(\frac{q}{2LC_{F}^{2}}\right)^{\frac{1}{\varepsilon}}. 
\end{equation}
It follows from \eqref{prep;unique;alpha;exact;1} and \eqref{prep;unique;alpha;exact;2} that
\begin{equation} \label{unique;alpha;0}
\|y-F(x_{0})-F'(x_{0})(x^{\dagger}-x_{0})\|_{Y}\leq\frac{q}{2}\|F(x_{0})-y\|_{Y}.
\end{equation}
Hence, \eqref{unique;alpha} holds with $\omega=2$.

Now, assuming that $\alpha_{k}$ is unique, we proceed and will prove this property later. Using \eqref{LM}, we have
\begin{align} \label{gamma_k+1-gamma_k}
&\frac{1}{2}\left(\|x_{k+1}-x^{\dagger}\|_{X}^{2}-\|x_{k}-x^{\dagger}\|_{X}^{2}\right) \nonumber \\
&\hspace{0.1in}=\frac{1}{2}\|x_{k+1}-x_{k}\|_{X}^{2}+(x_{k+1}-x_{k}, x_{k}-x^{\dagger})_{X} \nonumber \\
&\hspace{0.1in}=\frac{1}{2}\|(F'(x_{k})^{\ast}F'(x_{k})+\alpha_{k}I)^{-1}F'(x_{k})^{\ast}(y-F(x_{k}))\|_{X}^{2} \nonumber \\
&\hspace{0.3in}+((F'(x_{k})^{\ast}F'(x_{k})+\alpha_{k}I)^{-1}F'(x_{k})^{\ast}(y-F(x_{k})), x_{k}-x^{\dagger})_{X}.
\end{align}
We see each term on the right-hand side of \eqref{gamma_k+1-gamma_k}. It follows from \eqref{prep;ast} that 
\begin{align} \label{1st-term}
&\|(F'(x_{k})^{\ast}F'(x_{k})+\alpha_{k}I)^{-1}F'(x_{k})^{\ast}(y-F(x_{k}))\|_{X}^{2} \nonumber \\
&\hspace{0.1in}=\|F'(x_{k})^{\ast}(F'(x_{k})F'(x_{k})^{\ast}+\alpha_{k}I)^{-1}(y-F(x_{k}))\|_{X}^{2}.
\end{align}
We next estimate the second term on the right-hand side of \eqref{gamma_k+1-gamma_k}. We first note that we have
\begin{align} \label{prep;2nd-term}
&((F'(x_{k})^{\ast}F'(x_{k})+\alpha_{k}I)^{-1}F'(x_{k})^{\ast}(y-F(x_{k})), x_{k}-x^{\dagger})_{X} \nonumber \\
&\hspace{0.1in}=(F'(x_{k})^{\ast}(F'(x_{k})F'(x_{k})^{\ast}+\alpha_{k}I)^{-1}(y-F(x_{k})), x_{k}-x^{\dagger})_{X} \nonumber \\
&\hspace{0.1in}=((F'(x_{k})F'(x_{k})^{\ast}+\alpha_{k}I)^{-1}(y-F(x_{k})), -F'(x_{k})(x^{\dagger}-x_{k}))_{Y} \nonumber \\
&\hspace{0.1in}=-((F'(x_{k})F'(x_{k})^{\ast}+\alpha_{k}I)^{-1}(y-F(x_{k})), y-F(x_{k}))_{Y} \nonumber \\
&\hspace{0.3in}+((F'(x_{k})F'(x_{k})^{\ast}+\alpha_{k}I)^{-1}(y-F(x_{k})), y-F(x_{k})-F'(x_{k})(x^{\dagger}-x_{k}))_{Y}
\end{align}
by \eqref{prep;ast}. For the first inner product on the right-hand side of \eqref{prep;2nd-term},
\begin{align} \label{prep;2nd-term;1}
&((F'(x_{k})F'(x_{k})^{\ast}+\alpha_{k}I)^{-1}(y-F(x_{k})), y-F(x_{k}))_{Y} \nonumber \\
&\hspace{0.1in}=((F'(x_{k})F'(x_{k})^{\ast}+\alpha_{k}I)^{-1}(y-F(x_{k})), \nonumber \\
&\hspace{0.5in}(F'(x_{k})F'(x_{k})^{\ast}+\alpha_{k}I)(F'(x_{k})F'(x_{k})^{\ast}+\alpha_{k}I)^{-1}(y-F(x_{k})))_{Y} \nonumber \\
&\hspace{0.1in}=((F'(x_{k})F'(x_{k})^{\ast}+\alpha_{k}I)^{-1}(y-F(x_{k})), \nonumber \\
&\hspace{0.5in}F'(x_{k})F'(x_{k})^{\ast}(F'(x_{k})F'(x_{k})^{\ast}+\alpha_{k}I)^{-1}(y-F(x_{k})))_{Y} \nonumber \\
&\hspace{0.3in}+\alpha_{k}((F'(x_{k})F'(x_{k})^{\ast}+\alpha_{k}I)^{-1}(y-F(x_{k})), \nonumber \\
&\hspace{1in}(F'(x_{k})F'(x_{k})^{\ast}+\alpha_{k}I)^{-1}(y-F(x_{k})))_{Y} \nonumber \\
&\hspace{0.1in}=\|F'(x_{k})^{\ast}(F'(x_{k})F'(x_{k})^{\ast}+\alpha_{k}I)^{-1}(y-F(x_{k}))\|_{X}^{2} \nonumber \\
&\hspace{0.3in}+\alpha_{k}\|(F'(x_{k})F'(x_{k})^{\ast}+\alpha_{k}I)^{-1}(y-F(x_{k}))\|_{Y}^{2}
\end{align}
holds. Moreover, it follows from \eqref{MDP} that 
\begin{equation} \label{prep;alpha;nu}
\|(F'(x_{k})F'(x_{k})^{\ast}+\alpha_{k}I)^{-1}(y-F(x_{k}))\|_{Y}=\frac{q}{\alpha_{k}}\|y-F(x_{k})\|_{Y}.
\end{equation}
Thus, on the first inner product on the right-hand side of \eqref{prep;2nd-term}, we have
\begin{align} \label{prep;2nd-term;1-2}
&((F'(x_{k})F'(x_{k})^{\ast}+\alpha_{k}I)^{-1}(y-F(x_{k})), y-F(x_{k}))_{Y} \nonumber \\
&\hspace{0.1in}=\|F'(x_{k})^{\ast}(F'(x_{k})F'(x_{k})^{\ast}+\alpha_{k}I)^{-1}(y-F(x_{k}))\|_{X}^{2}+\frac{q^{2}}{\alpha_{k}}\|y-F(x_{k})\|_{Y}^{2}
\end{align}
by \eqref{prep;2nd-term;1} and \eqref{prep;alpha;nu}. Concerning the second inner product on the right-hand side of \eqref{prep;2nd-term}, using the Cauchy-Schwarz inequality and \eqref{prep;alpha;nu} yields
\begin{align} \label{prep;2nd-term;2}
&((F'(x_{k})F'(x_{k})^{\ast}+\alpha_{k}I)^{-1}(y-F(x_{k})), y-F(x_{k})-F'(x_{k})(x^{\dagger}-x_{k}))_{Y}  \nonumber \\
&\hspace{0.1in}\leq\|(F'(x_{k})F'(x_{k})^{\ast}+\alpha_{k}I)^{-1}(y-F(x_{k}))\|_{Y}\|y-F(x_{k})-F'(x_{k})(x^{\dagger}-x_{k}))\|_{Y} \nonumber \\
&\hspace{0.1in}=\frac{q}{\alpha_{k}}\|y-F(x_{k})\|_{Y}\|y-F(x_{k})-F'(x_{k})(x^{\dagger}-x_{k})\|_{Y}.
\end{align}
Combining \eqref{prep;2nd-term}, \eqref{prep;2nd-term;1-2} and \eqref{prep;2nd-term;2}, we have
\begin{align} \label{2nd-term}
&((F'(x_{k})^{\ast}F'(x_{k})+\alpha_{k}I)^{-1}F'(x_{k})^{\ast}(y-F(x_{k})), x_{k}-x^{\dagger})_{X} \nonumber \\
&\hspace{0.1in}\leq-\|F'(x_{k})^{\ast}(F'(x_{k})F'(x_{k})^{\ast}+\alpha_{k}I)^{-1}(y-F(x_{k}))\|_{X}^{2}-\frac{q^{2}}{\alpha_{k}}\|y-F(x_{k})\|_{Y}^{2} \nonumber \\
&\hspace{0.3in}+\frac{q}{\alpha_{k}}\|y-F(x_{k})\|_{Y}\|y-F(x_{k})-F'(x_{k})(x^{\dagger}-x_{k})\|_{Y}.
\end{align}
Substituting \eqref{1st-term} and \eqref{2nd-term} into \eqref{gamma_k+1-gamma_k} and using the notation 
\[
\gamma_{k}=\frac{1}{2}\|x_{k}-x^{\dagger}\|_{X}^{2},
\]
we have
\begin{align} \label{k+1}
\gamma_{k+1}-\gamma_{k}&\leq-\frac{1}{2}\|F'(x_{k})^{\ast}(F'(x_{k})F'(x_{k})^{\ast}+\alpha_{k}I)^{-1}(y-F(x_{k}))\|_{X}^{2} \nonumber \\
&\hspace{0.3in}-\frac{q^{2}}{\alpha_{k}}\|y-F(x_{k})\|_{Y}^{2} \nonumber \\
&\hspace{0.3in}+\frac{q}{\alpha_{k}}\|y-F(x_{k})\|_{Y}\|y-F(x_{k})-F'(x_{k})(x^{\dagger}-x_{k})\|_{Y}.
\end{align}
We now prove
\begin{equation} \label{induction}
\gamma_{k}\leq\rho
\end{equation}
and
\begin{equation} \label{unique;alpha;k}
\|y-F(x_{k})-F'(x_{k})(x^{\dagger}-x_{k})\|_{Y}\leq\frac{q}{2}\|F(x_{k})-y\|_{Y}
\end{equation}
by induction. We remark that the estimate \eqref{unique;alpha;k} is \eqref{unique;alpha} with $\omega=2$. Hence, \eqref{unique;alpha;k} implies that $\alpha_{k}$ satisfying \eqref{MDP} is unique. We can also define $x_{k+1}$ by \eqref{LM}. First, it follows from \eqref{xk} and \eqref{unique;alpha;0} that \eqref{induction} and \eqref{unique;alpha;k} hold for $k=0$. Now, we assume that \eqref{induction} and \eqref{unique;alpha;k} for $k=m$. In particular, we can define $\alpha_{m}$ and $x_{m+1}$ by \eqref{MDP} and \eqref{LM}, respectively. We here remark that $x_{m}, x^{\dagger}\in B$ by \eqref{x0} and the assumption of induction. Therefore, applying \eqref{F'-Lip}, \cite[Lemma\,A.63]{Kirsch} and \eqref{F;inv-Holder} yields
\begin{equation} \label{prep;FTftFD}
\|y-F(x_{m})-F'(x_{m})(x^{\dagger}-x_{m})\|_{Y}\leq\frac{L}{2}\|x_{m}-x^{\dagger}\|_{X}^{2}\leq LC_{F}^{2}\|F(x_{m})-y\|_{Y}^{1+\varepsilon}.
\end{equation}
By the mean value inequality, \eqref{F'-bounded}, the assumption of induction and \eqref{rho}, we have
\begin{equation} \label{MVI}
\|F(x_{m})-y\|_{Y}\leq\widehat{L}\|x_{m}-x^{\dagger}\|_{X}\leq\sqrt{2\rho}\widehat{L}=\left(\frac{q}{2LC_{F}^{2}}\right)^{\frac{1}{\varepsilon}}.
\end{equation}
Then, substituting \eqref{prep;FTftFD} into \eqref{k+1} and using \eqref{MVI} imply
\begin{align} \label{m+1;prep;2}
\gamma_{m+1}-\gamma_{m}&\leq-\frac{1}{2}\|F'(x_{m})^{\ast}(F'(x_{m})F'(x_{m})^{\ast}+\alpha_{m}I)^{-1}(y-F(x_{m}))\|_{X}^{2} \nonumber \\
&\hspace{0.3in}-\frac{q^{2}}{\alpha_{m}}\|y-F(x_{m})\|_{Y}^{2}+\frac{q LC_{F}^{2}}{\alpha_{m}}\|y-F(x_{m})\|_{Y}^{2+\varepsilon} \nonumber \\
&\leq-\frac{q^{2}}{\alpha_{m}}\left(1-\frac{LC_{F}^{2}}{q}\|y-F(x_{m})\|_{Y}^{\varepsilon}\right)\|y-F(x_{m})\|_{Y}^{2} \nonumber \\
&\leq-\frac{q^{2}}{2\alpha_{m}}\|y-F(x_{m})\|_{Y}^{2}.
\end{align}
It follows from \eqref{m+1;prep;2} that $\gamma_{m+1}-\gamma_{m}\leq0$. Hence, we have 
\begin{equation} \label{MD}
\gamma_{m+1}\leq\gamma_{m}\leq\rho.
\end{equation}
Next, applying \eqref{F'-Lip}, \cite[Lemma\,A.63]{Kirsch} and \eqref{F;inv-Holder} yields
\begin{equation} \label{prep;FTftFD;m+1}
\|y-F(x_{m+1})-F'(x_{m+1})(x^{\dagger}-x_{m+1})\|_{Y}\leq\frac{L}{2}\|x_{m+1}-x^{\dagger}\|_{X}^{2}\leq LC_{F}^{2}\|F(x_{m+1})-y\|_{Y}^{1+\varepsilon}.
\end{equation}
in the same way as obtaining \eqref{prep;FTftFD}. Similarly to \eqref{MVI}, using the mean value inequality, \eqref{F'-bounded}, \eqref{MD} and \eqref{rho}, we have 
\begin{equation} \label{MVI;m+1}
\|F(x_{m+1})-y\|_{Y}\leq\widehat{L}\|x_{m+1}-x^{\dagger}\|_{X}\leq\sqrt{2\rho}\widehat{L}=\left(\frac{q}{2LC_{F}^{2}}\right)^{\frac{1}{\varepsilon}}.
\end{equation}
Combining \eqref{prep;FTftFD;m+1} and \eqref{MVI;m+1}, \eqref{unique;alpha;k} holds for $k=m+1$.

Therefore, by induction, it can be shown that $\alpha_{k}$, and consequently $x_{k+1}$, is well-defined, and furthermore \eqref{induction} holds for all non-negative integers $k$.

Since $\{\gamma_{k}\}_{k=0}^{\infty}$ is monotonic decreasing and bounded below by \eqref{MD}, it is convergent. We note that we have
\begin{equation} \label{alpha_k;L}
\alpha_{k}\leq\frac{q\widehat{L}^{2}}{1-q}
\end{equation}
from \eqref{alpha_k} and \eqref{F'-bounded}. Moreover, using \eqref{m+1;prep;2}, \eqref{alpha_k;L}, \eqref{F;inv-Holder} and \eqref{c}, we obtain
\begin{align} \label{order;k+1}
\gamma_{k+1}-\gamma_{k}&\leq-\frac{q^{2}}{2\alpha_{k}}\|y-F(x_{k})\|_{Y}^{2}\leq-\frac{q(1-q)}{2\widehat{L}^{2}}\|y-F(x_{k})\|_{Y}^{2} \nonumber \\
&\leq-\frac{q(1-q)}{2\widehat{L}^{2}C_{F}^{\frac{4}{1+\varepsilon}}}\gamma_{k}^{\frac{2}{1+\varepsilon}}=-c\gamma_{k}^{\frac{2}{1+\varepsilon}}.
\end{align}
It follows from \eqref{c} that $0<c<1$. By letting $k$ go to infinity on both sides of \eqref{order;k+1}, we conclude that $\gamma_{k}\to 0$ as $k\to\infty$. Furthermore, in the case where $\varepsilon=1$, the estimate \eqref{order;k+1} yields the convergence rate \eqref{conv-rate;L}.

For the convergence rate with $0<\varepsilon<1$, since
\[
\gamma_{k+1}\leq\gamma_{k}-c\gamma_{k}^{\frac{2}{1+\varepsilon}}=\gamma_{k}(1-c\gamma_{k}^{\frac{1-\varepsilon}{1+\varepsilon}})
\]
holds by \eqref{order;k+1}, we obtain
\begin{equation} \label{prep;order}
\gamma_{k+1}^{-\frac{1-\varepsilon}{1+\varepsilon}}\geq\gamma_{k}^{-\frac{1-\varepsilon}{1+\varepsilon}}(1-c\gamma_{k}^{\frac{1-\varepsilon}{1+\varepsilon}})^{-\frac{1-\varepsilon}{1+\varepsilon}}\geq\gamma_{k}^{-\frac{1-\varepsilon}{1+\varepsilon}}+c\frac{1-\varepsilon}{1+\varepsilon},
\end{equation}
where we use $(1-s)^{-\beta}\geq1+\beta s$ for $0\leq s<1$ and $0\leq\beta\leq 1$. Hence, from \eqref{prep;order} and \eqref{x0}, we have
\[
\gamma_{k}\leq\left(\gamma_{0}^{-\frac{1-\varepsilon}{1+\varepsilon}}+c\frac{1-\varepsilon}{1+\varepsilon}k\right)^{{-\frac{1+\varepsilon}{1-\varepsilon}}}\leq\left(\rho^{-\frac{1-\varepsilon}{1+\varepsilon}}+c\frac{1-\varepsilon}{1+\varepsilon}k\right)^{{-\frac{1+\varepsilon}{1-\varepsilon}}}.
\]
The proof is complete.
\end{proof}

\begin{remark} \label{rmk:mono;exact}
Since it follows from \eqref{KNS;4.5} that $\|x_{k}-x^{\dagger}\|_{X}^{2}$ decreases monotonically, one might think that \eqref{xk} could be easily derived by combining it with \eqref{x0}. However, since we would like to prove convergence and obtain the convergence rate in Theorem~\ref{thm:exact;Hilbert}, a different approach is required.
\end{remark}

\section{Analysis for noisy data} \label{section:noisy}
In this section, we study the noisy data case under the same conditions on the operator~$F$ as the exact data case. We construct the sequence~$\{x_{k}^{\delta}\}$ by the LM-method for noisy data~$y^{\delta}$. Then, the following theorem states that we can obtain an approximate solution when we stop the iteration under the criterion~\eqref{DiscrepancyPrinciple}.
\begin{theorem} \label{thm:noisy;Hilbert}
We assume that there exists a solution $x^{\dagger}$ to the equation~\eqref{NOE} and that Assumption~\ref{asm;Holder} holds. Let
\begin{equation} \label{rho;noisy}
\rho=\frac{1}{2\widehat{L}^{2}}\left(\frac{q}{4LC_{F}^{2}}\right)^{\frac{2}{\varepsilon}}<\rho'.
\end{equation}
We assume that \eqref{DiscrepancyPrinciple} holds with $\tau$ satisfying
\begin{equation} \label{R;noisy}
R:=\frac{3}{4}-\left(\frac{1}{q}+\frac{1}{4}\right)\frac{1}{\tau}>0.
\end{equation}
If \eqref{noise} and \eqref{x0} hold, then we have
\begin{equation} \label{mono;noisy}
\frac{1}{2}\|x_{k+1}^{\delta}-x^{\dagger}\|_{X}^{2}\leq\frac{1}{2}\|x_{k}^{\delta}-x^{\dagger}\|_{X}^{2}\hspace{0.2in}\mbox{for }0\leq k<k_{\ast}.
\end{equation}
The stopping index $k_{\ast}$ is finite. Moreover, for a given $\delta>0$, if $\rho>0$ is such that 
\begin{equation} \label{asm;noisy}
\rho\leq C\delta^{2}
\end{equation}
for some $C>0$, then the following convergence rates can be derived: 
\begin{equation} \label{stab;noisy}
\frac{1}{2}\|x_{k_{\ast}}^{\delta}-x^{\dagger}\|_{X}^{2}\leq C'\delta^{2},
\end{equation}
where
\[
C'=C-\frac{q(1-q)\tau^{2}Rk_{\ast}}{\widehat{L}^{2}}.
\]
\end{theorem}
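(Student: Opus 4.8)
The plan is to mirror the exact-data argument of Theorem~\ref{thm:exact;Hilbert}, replacing $y$ by $y^{\delta}$ and $x_{k}$ by $x_{k}^{\delta}$ throughout, and to absorb the noise through the discrepancy principle~\eqref{DiscrepancyPrinciple}. Repeating the computation that led to \eqref{k+1}, which used only the structure of \eqref{LM}, \eqref{MDP} and \eqref{prep;ast} rather than the exactness of the data, yields the per-step inequality
\begin{align*}
\gamma_{k+1}^{\delta}-\gamma_{k}^{\delta}&\leq-\frac{q^{2}}{\alpha_{k}}\|y^{\delta}-F(x_{k}^{\delta})\|_{Y}^{2} \\
&\hspace{0.3in}+\frac{q}{\alpha_{k}}\|y^{\delta}-F(x_{k}^{\delta})\|_{Y}\|y^{\delta}-F(x_{k}^{\delta})-F'(x_{k}^{\delta})(x^{\dagger}-x_{k}^{\delta})\|_{Y},
\end{align*}
writing $\gamma_{k}^{\delta}=\frac{1}{2}\|x_{k}^{\delta}-x^{\dagger}\|_{X}^{2}$ and discarding the nonpositive squared term. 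As in the exact case I would then run an induction establishing simultaneously that $\gamma_{k}^{\delta}\leq\rho$ and that \eqref{unique;alpha} holds for some $\omega>1$, the latter guaranteeing that $\alpha_{k}$, and hence $x_{k+1}^{\delta}$, is well-defined for every $k<k_{\ast}$.

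The crux is estimating the linearization residual in the presence of noise. Splitting off the data error via \eqref{noise} and then applying the fundamental theorem of calculus~\cite[Lemma\,A.63]{Kirsch}, \eqref{F'-Lip} and \eqref{F;inv-Holder} gives
\[
\|y^{\delta}-F(x_{k}^{\delta})-F'(x_{k}^{\delta})(x^{\dagger}-x_{k}^{\delta})\|_{Y}\leq\delta+LC_{F}^{2}\|F(x_{k}^{\delta})-y\|_{Y}^{1+\varepsilon}.
\]
Here the sharpened value of $\rho$ in \eqref{rho;noisy}, with the factor $4$ rather than $2$, is exactly what is needed: under the inductive hypothesis $\gamma_{k}^{\delta}\leq\rho$, the mean value inequality and \eqref{F'-bounded} force $\|F(x_{k}^{\delta})-y\|_{Y}\leq\sqrt{2\rho}\,\widehat{L}=(q/(4LC_{F}^{2}))^{1/\varepsilon}$, whence $LC_{F}^{2}\|F(x_{k}^{\delta})-y\|_{Y}^{\varepsilon}\leq q/4$, leaving room for the noise term. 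Combining this with $\|F(x_{k}^{\delta})-y\|_{Y}\leq\|y^{\delta}-F(x_{k}^{\delta})\|_{Y}+\delta$ and, crucially, the discrepancy bound $\delta<\tau^{-1}\|y^{\delta}-F(x_{k}^{\delta})\|_{Y}$ valid for $k<k_{\ast}$, I would collect all terms so that the surviving coefficient is precisely $R$ from \eqref{R;noisy}, arriving at
\[
\gamma_{k+1}^{\delta}-\gamma_{k}^{\delta}\leq-\frac{q^{2}R}{\alpha_{k}}\|y^{\delta}-F(x_{k}^{\delta})\|_{Y}^{2}.
\]
The same chain of estimates verifies \eqref{unique;alpha}; indeed $R>0$ is equivalent to the existence of an admissible $\omega>1$, which closes the induction and yields $\gamma_{k}^{\delta}\leq\rho$ for all $k\leq k_{\ast}$.

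Since $R>0$, the displayed inequality is nonpositive, which is exactly the monotonicity statement~\eqref{mono;noisy}. To see that $k_{\ast}$ is finite I would bound $\alpha_{k}$ from above by $q\widehat{L}^{2}/(1-q)$ using Lemma~\ref{lem:KNS;Prop;4.1} together with \eqref{F'-bounded}, and bound $\|y^{\delta}-F(x_{k}^{\delta})\|_{Y}$ from below by $\tau\delta$ using \eqref{DiscrepancyPrinciple}; this converts the per-step decrease into the uniform estimate
\[
\gamma_{k+1}^{\delta}-\gamma_{k}^{\delta}\leq-\frac{q(1-q)R\tau^{2}\delta^{2}}{\widehat{L}^{2}},\hspace{0.2in}0\leq k<k_{\ast}.
\]
Because $\gamma_{k}^{\delta}\geq0$, only finitely many such fixed decrements are possible, so $k_{\ast}<\infty$. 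Summing this uniform estimate over $k=0,\dots,k_{\ast}-1$ and using $\gamma_{0}^{\delta}\leq\rho\leq C\delta^{2}$ from \eqref{x0} and \eqref{asm;noisy} then gives $\gamma_{k_{\ast}}^{\delta}\leq(C-q(1-q)R\tau^{2}k_{\ast}/\widehat{L}^{2})\delta^{2}=C'\delta^{2}$, which is \eqref{stab;noisy}.

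The main obstacle I anticipate is the cross-term estimate together with the simultaneous verification of \eqref{unique;alpha}: one must convert the uncontrollable noise contribution $\delta$ into a controllable fraction of the residual $\|y^{\delta}-F(x_{k}^{\delta})\|_{Y}$ through the discrepancy principle, while arranging the H\"older and Lipschitz constants via the tightened choice of $\rho$ in \eqref{rho;noisy} so that the coefficient left over is exactly $R$. The bookkeeping of the constants $q$, $\tau$, $L$, $C_{F}$ and $\widehat{L}$ is delicate, and the consistency of the whole argument hinges on the single inequality $R>0$.
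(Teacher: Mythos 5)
Your proposal is correct and follows essentially the same route as the paper's own proof: the same per-step inequality (the paper's \eqref{prep;k+1;noisy}), the same simultaneous induction on $\gamma_{k}^{\delta}\leq\rho$ and on \eqref{unique;alpha} with $\omega=\frac{1}{1-R}$, the same use of the tightened radius \eqref{rho;noisy} to force $LC_{F}^{2}\|F(x_{k}^{\delta})-y\|_{Y}^{\varepsilon}\leq\frac{q}{4}$, the same conversion of $\delta$ into $\tau^{-1}\|y^{\delta}-F(x_{k}^{\delta})\|_{Y}$ yielding exactly the coefficient $R$, and the same bound $\alpha_{k}\leq\frac{q\widehat{L}^{2}}{1-q}$ from Lemma~\ref{lem:KNS;Prop;4.1} for the finiteness and rate arguments. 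The only cosmetic difference is that you establish $k_{\ast}<\infty$ via a uniform per-step decrement, whereas the paper sums the residuals as in \eqref{k;ast}--\eqref{prep;k;ast}; the two are equivalent.
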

\begin{proof}
If $k_{\ast}=0$, nothing has to be shown. Hence, let us now assume that $k_{\ast}\geq1$. We first demonstrate that assumptions of Theorem~\ref{thm:noisy;Hilbert} imply \eqref{unique;alpha} for $k=0$. It follows from \eqref{noise}, \cite[Lemma\,A.63]{Kirsch} and \eqref{F;inv-Holder} that
\begin{align} \label{prep;unique;alpha;noisy;1}
&\|y^{\delta}-F(x_{0})-F'(x_{0})(x^{\dagger}-x_{0})\|_{Y} \nonumber \\
&\hspace{0.1in}\leq\delta+\|y-F(x_{0})-F'(x_{0})(x^{\dagger}-x_{0})\|_{Y} \nonumber \\
&\hspace{0.1in}\leq\delta+\frac{L}{2}\|x_{0}-x^{\dagger}\|_{X}^{2} \nonumber \\
&\hspace{0.1in}\leq\delta+LC_{F}^{2}\|F(x_{0})-y\|_{Y}^{1+\varepsilon} \nonumber \\
&\hspace{0.1in}\leq\delta+LC_{F}^{2}\|F(x_{0})-y\|_{Y}^{\varepsilon}(\|F(x_{0})-y^{\delta}\|_{Y}+\delta).
\end{align}
By the mean value inequality, \eqref{F'-bounded}, \eqref{x0} and \eqref{rho;noisy}, we have
\begin{equation} \label{prep;unique;alpha;noisy;2}
\|F(x_{0})-y\|_{Y}\leq\widehat{L}\|x_{0}-x^{\dagger}\|_{X}\leq\sqrt{2\rho}\widehat{L}=\left(\frac{q}{4LC_{F}^{2}}\right)^{\frac{1}{\varepsilon}}. 
\end{equation}
Substituting \eqref{prep;unique;alpha;noisy;2} into \eqref{prep;unique;alpha;noisy;1}, we obtain
\begin{align} \label{prep;unique;alpha;noisy;3}
\|y^{\delta}-F(x_{0})-F'(x_{0})(x^{\dagger}-x_{0})\|_{Y}&\leq\delta+\frac{q}{4}(\|F(x_{0})-y^{\delta}\|_{Y}+\delta) \nonumber \\
&=\left(1+\frac{q}{4}\right)\delta+\frac{q}{4}\|F(x_{0})-y^{\delta}\|_{Y}.
\end{align}
Combining \eqref{DiscrepancyPrinciple}, \eqref{prep;unique;alpha;noisy;3} and \eqref{R;noisy}, we have
\begin{align} \label{unique;alpha;0;noisy}
\|y^{\delta}-F(x_{0})-F'(x_{0})(x^{\dagger}-x_{0})\|_{Y}&<\left(\left(1+\frac{q}{4}\right)\frac{1}{\tau}+\frac{q}{4}\right)\|F(x_{0})-y^{\delta}\|_{Y} \nonumber \\
&=q(1-R)\|F(x_{0})-y^{\delta}\|_{Y}.
\end{align}
It follows from \eqref{R;noisy} that $0<R<\frac{3}{4}$, hence \eqref{unique;alpha} holds with $\omega=\frac{1}{1-R}>1$.

Let us assume that $\alpha_{k}$ is unique, and we will provide a proof of this property later. Analyzing the case where we know only noisy data in the same way as getting \eqref{k+1} and using the notation
\[
\gamma_{k}^{\delta}=\frac{1}{2}\|x_{k}^{\delta}-x^{\dagger}\|_{X}^{2}
\]
yield
\begin{align} \label{prep;k+1;noisy}
\gamma_{k+1}^{\delta}-\gamma_{k}^{\delta}&\leq-\frac{1}{2}\|F'(x_{k}^{\delta})^{\ast}(F'(x_{k}^{\delta})F'(x_{k}^{\delta})^{\ast}+\alpha_{k}I)^{-1}(y^{\delta}-F(x_{k}^{\delta}))\|_{X}^{2} \nonumber \\
&\hspace{0.2in}-\frac{q^{2}}{\alpha_{k}}\|y^{\delta}-F(x_{k}^{\delta})\|_{Y}^{2} \nonumber \\
&\hspace{0.2in}+\frac{q}{\alpha_{k}}\|y^{\delta}-F(x_{k}^{\delta})\|_{Y}\|y^{\delta}-F(x_{k}^{\delta})-F'(x_{k}^{\delta})(x^{\dagger}-x_{k}^{\delta})\|_{Y}.
\end{align}
We now prove
\begin{equation} \label{induction;noisy}
\gamma_{k}^{\delta}\leq\rho\hspace{0.1in}\mbox{for }0\leq k<k_{\ast}
\end{equation}
and
\begin{equation} \label{unique;alpha;k;noisy}
\|y^{\delta}-F(x_{k}^{\delta})-F'(x_{k}^{\delta})(x^{\dagger}-x_{k}^{\delta})\|_{Y}\leq q(1-R)\|F(x_{k}^{\delta})-y^{\delta}\|_{Y}\hspace{0.1in}\mbox{for }0\leq k<k_{\ast}
\end{equation}
by induction. We remark that the estimate \eqref{unique;alpha;k;noisy} is \eqref{unique;alpha} with $\omega=\frac{1}{1-R}$. Hence, we can define $\alpha_{k}$ and $x_{k+1}$ by \eqref{MDP} and \eqref{LM}, respectively. First, \eqref{induction;noisy} and \eqref{unique;alpha;k;noisy} hold for $k=0$ by \eqref{x0} and \eqref{unique;alpha;0;noisy}. Now, we assume \eqref{induction;noisy} and \eqref{unique;alpha;k;noisy} for $k=m<k_{\ast}-1$. In particular, we can define $\alpha_{m}$ and $x_{m+1}$ by \eqref{MDP} and \eqref{LM}, respectively. It follows from \eqref{F'-Lip}, \cite[Lemma\,A.63]{Kirsch}, \eqref{F;inv-Holder} and \eqref{noise} that
\begin{align} \label{prep;FTftFD;noisy}
&\|y-F(x_{m}^{\delta})-F'(x_{m}^{\delta})(x^{\dagger}-x_{m}^{\delta})\|_{Y} \nonumber \\
&\hspace{0.1in}\leq\frac{L}{2}\|x_{m}^{\delta}-x^{\dagger}\|_{X}^{2} \nonumber \\
&\hspace{0.1in}\leq LC_{F}^{2}\|F(x_{m}^{\delta})-y\|_{Y}^{1+\varepsilon} \nonumber \\
&\hspace{0.1in}\leq LC_{F}^{2}\|F(x_{m}^{\delta})-y\|_{Y}^{\varepsilon}(\delta+\|F(x_{m}^{\delta})-y^{\delta}\|_{Y}).
\end{align}
Applying the mean value inequality, \eqref{F'-bounded}, the assumption of induction and \eqref{rho;noisy} yields
\begin{equation} \label{MVI;noisy}
\|F(x_{m}^{\delta})-y\|_{Y}\leq\widehat{L}\|x_{m}^{\delta}-x^{\dagger}\|_{X}\leq\sqrt{2\rho}\widehat{L}=\left(\frac{q}{4LC_{F}^{2}}\right)^{\frac{1}{\varepsilon}}.
\end{equation}
Combining \eqref{noise}, \eqref{prep;FTftFD;noisy} and \eqref{MVI;noisy} implies
\begin{align} \label{unique;alpha;m;noisy}
&\|y^{\delta}-F(x_{m}^{\delta})-F'(x_{m}^{\delta})(x^{\dagger}-x_{m}^{\delta})\|_{Y} \nonumber \\
&\hspace{0.1in}\leq\delta+LC_{F}^{2}\|F(x_{m}^{\delta})-y\|_{Y}^{\varepsilon}(\|F(x_{m}^{\delta})-y^{\delta}\|_{Y}+\delta) \nonumber \\
&\hspace{0.1in}\leq\delta+\frac{q}{4}(\|F(x_{m}^{\delta})-y^{\delta}\|_{Y}+\delta) \nonumber \\
&\hspace{0.1in}=\left(1+\frac{q}{4}\right)\delta+\frac{q}{4}\|F(x_{m}^{\delta})-y^{\delta}\|_{Y}.
\end{align}
Substituting \eqref{unique;alpha;m;noisy} into \eqref{prep;k+1;noisy} yields
\begin{align} \label{prep;m+1;3;noisy}
\gamma_{m+1}^{\delta}-\gamma_{m}^{\delta}&\leq-\frac{q^{2}}{\alpha_{m}}\|y^{\delta}-F(x_{m}^{\delta})\|_{Y}^{2} \nonumber \\
&\hspace{0.1in}+\frac{q}{\alpha_{m}}\|y^{\delta}-F(x_{m}^{\delta})\|_{Y}\left(\left(1+\frac{q}{4}\right)\delta+\frac{q}{4}\|F(x_{m}^{\delta})-y^{\delta}\|_{Y}\right) \nonumber \\
&=-\frac{3q^{2}}{4\alpha_{m}}\|y^{\delta}-F(x_{m}^{\delta})\|_{Y}^{2}+\frac{q}{\alpha_{m}}\left(1+\frac{q}{4}\right)\delta\|y^{\delta}-F(x_{m}^{\delta})\|_{Y}.
\end{align}
Combining \eqref{prep;m+1;3;noisy}, \eqref{DiscrepancyPrinciple}, \eqref{R;noisy} and \eqref{alpha_k;L} yields
\begin{align} \label{m+1;2;noisy}
\gamma_{m+1}^{\delta}-\gamma_{m}^{\delta}&<-\frac{3q^{2}}{4\alpha_{m}}\|y^{\delta}-F(x_{m}^{\delta})\|_{Y}^{2}+\frac{q}{\alpha_{m}}\left(1+\frac{q}{4}\right)\frac{1}{\tau}\|y^{\delta}-F(x_{m}^{\delta})\|_{Y}^{2} \nonumber \\
&=-\frac{q^{2}}{\alpha_{m}}R\|y^{\delta}-F(x_{m}^{\delta})\|_{Y}^{2}\leq-\frac{q(1-q)}{\widehat{L}^{2}}R\|y^{\delta}-F(x_{m}^{\delta})\|_{Y}^{2}
\end{align}
It follows from \eqref{m+1;2;noisy} and \eqref{R;noisy} that
\begin{equation} \label{MD;noisy}
\gamma_{m+1}^{\delta}\leq\gamma_{m}^{\delta}\leq\rho.
\end{equation}
Next, in the same way as getting \eqref{prep;FTftFD;noisy}, we obtain
\begin{align} \label{prep;FTftFD;m+1;noisy}
&\|y-F(x_{m+1}^{\delta})-F'(x_{m+1}^{\delta})(x^{\dagger}-x_{m+1}^{\delta})\|_{Y} \nonumber \\
&\hspace{0.1in}\leq LC_{F}^{2}\|F(x_{m+1}^{\delta})-y\|_{Y}^{\varepsilon}(\delta+\|F(x_{m+1}^{\delta})-y^{\delta}\|_{Y}).
\end{align}
Similarly to \eqref{MVI;noisy}, we have
\begin{equation} \label{MVI;m+1;noisy}
\|F(x_{m+1}^{\delta})-y\|_{Y}\leq\left(\frac{q}{4LC_{F}^{2}}\right)^{\frac{1}{\varepsilon}}.
\end{equation}
Combining \eqref{noise}, \eqref{prep;FTftFD;m+1;noisy}, \eqref{MVI;m+1;noisy}, \eqref{DiscrepancyPrinciple} and \eqref{R;noisy} yields
\begin{align*}
& \|y^{\delta}-F(x_{m+1}^{\delta})-F'(x_{m+1}^{\delta})(x^{\dagger}-x_{m+1}^{\delta})\|_{Y} \\
&\hspace{0.1in}\leq\delta+LC_{F}^{2}\|F(x_{m+1}^{\delta})-y\|_{Y}^{\varepsilon}(\delta+\|F(x_{m+1}^{\delta})-y^{\delta}\|_{Y}) \\
&\hspace{0.1in}\leq\left(1+\frac{q}{4}\right)\delta+\frac{q}{4}\|F(x_{m+1}^{\delta})-y^{\delta}\|_{Y} \\
&\hspace{0.1in}\leq\left(1+\frac{q}{4}\right)\frac{1}{\tau}\|F(x_{m+1}^{\delta})-y^{\delta}\|_{Y}+\frac{q}{4}\|F(x_{m+1}^{\delta})-y^{\delta}\|_{Y} \\
&\hspace{0.1in}=q(1-R)\|F(x_{m+1}^{\delta})-y^{\delta}\|_{Y},
\end{align*}
which is \eqref{unique;alpha;k;noisy} for $k=m+1$. Therefore, by induction, it can be shown that \eqref{induction;noisy} holds and we can define $\alpha_{k}$ and $x_{k+1}$ for $0\leq k<k_{\ast}$. 

We next show $k_{\ast}<\infty$. It follows from \eqref{m+1;2;noisy} and \eqref{x0} that
\begin{align} \label{k;ast}
\frac{q(1-q)}{\widehat{L}^{2}}R\sum_{k=0}^{k_{\ast}-1}\|y^{\delta}-F(x_{k}^{\delta})\|_{Y}^{2}&\leq\frac{1}{2}\|x_{0}-x^{\dagger}\|_{X}^{2}-\frac{1}{2}\|x_{k_{\ast}}^{\delta}-x^{\dagger}\|_{X}^{2} \nonumber \\
&\leq\frac{1}{2}\|x_{0}-x^{\dagger}\|_{X}^{2}\leq\rho.
\end{align}
We remark that we have
\begin{equation} \label{prep;k;ast}
\sum_{k=0}^{k_{\ast}-1}\|y^{\delta}-F(x_{k}^{\delta})\|_{Y}^{2}>(\tau\delta)^{2}k_{\ast}
\end{equation}
from \eqref{DiscrepancyPrinciple}.
Hence, combining \eqref{k;ast} and \eqref{prep;k;ast}, we obtain
\[
k_{\ast}\leq\frac{\widehat{L}^{2}\rho}{q(1-q)R(\tau\delta)^{2}}<\infty.
\]

Finally, we establish a convergence rate. Using \eqref{k;ast}, \eqref{prep;k;ast} and \eqref{asm;noisy} yields
\begin{align*}
\frac{1}{2}\|x_{k_{\ast}}^{\delta}-x^{\dagger}\|_{X}^{2}&\leq\frac{1}{2}\|x_{0}-x^{\dagger}\|_{X}^{2}-\frac{q(1-q)}{\widehat{L}^{2}}R(\tau\delta)^{2}k_{\ast} \\
&\leq\rho-\frac{q(1-q)}{\widehat{L}^{2}}R(\tau\delta)^{2}k_{\ast} \\
&\leq C\delta^{2}-\frac{q(1-q)}{\widehat{L}^{2}}R(\tau\delta)^{2}k_{\ast}=C'\delta^{2}.
\end{align*}
Hence, we have completed the proof.
\end{proof}

With an additional assumption, we can have a logarithmic estimate for $k_{\ast}$. The details are as follows.
\begin{proposition} \label{prop:kast}
Let the assumptions of Theorem~\ref{thm:noisy;Hilbert} hold. In addition, we assume
\begin{equation} \label{nu;additional}
0<q<\frac{2\sqrt{2}\widehat{L}C_{F}}{1+2\sqrt{2}\widehat{L}C_{F}}
\end{equation}
and $\varepsilon=1$. Then, we have
\[
k_{\ast}(\delta, y^{\delta})=O(1+|\log\delta|).
\]
\end{proposition}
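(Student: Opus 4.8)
The plan is to upgrade the polynomial bound $k_{\ast}\le \widehat{L}^{2}\rho/\bigl(q(1-q)R(\tau\delta)^{2}\bigr)$ obtained inside the proof of Theorem~\ref{thm:noisy;Hilbert} to a logarithmic one, by exploiting that when $\varepsilon=1$ the decay of the error is geometric rather than merely algebraic (compare \eqref{conv-rate;L} with \eqref{conv-rate;H}). Throughout I write $\gamma_{k}^{\delta}=\frac12\|x_{k}^{\delta}-x^{\dagger}\|_{X}^{2}$, as in that proof. Since $\varepsilon=1$, the H\"older estimate \eqref{F;inv-Holder} is Lipschitz, so $\gamma_{k}^{\delta}\le C_{F}^{2}\|F(x_{k}^{\delta})-y\|_{Y}^{2}$ for $0\le k< k_{\ast}$. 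For such $k$ the discrepancy principle \eqref{DiscrepancyPrinciple} gives $\delta<\tau^{-1}\|y^{\delta}-F(x_{k}^{\delta})\|_{Y}$, and combining this with $\|F(x_{k}^{\delta})-y\|_{Y}\le\|F(x_{k}^{\delta})-y^{\delta}\|_{Y}+\delta$ yields the lower bound
\begin{equation*}
\|y^{\delta}-F(x_{k}^{\delta})\|_{Y}^{2}>\frac{1}{C_{F}^{2}\,(1+\tau^{-1})^{2}}\,\gamma_{k}^{\delta},\qquad 0\le k<k_{\ast}.
\end{equation*}

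First I would feed this lower bound into the one-step decrease \eqref{m+1;2;noisy}, $\gamma_{k+1}^{\delta}-\gamma_{k}^{\delta}\le-\frac{q(1-q)R}{\widehat{L}^{2}}\|y^{\delta}-F(x_{k}^{\delta})\|_{Y}^{2}$, valid for every $0\le k\le k_{\ast}-1$ (this is what the telescoping sum \eqref{k;ast} already uses). This produces a genuine contraction
\begin{equation*}
\gamma_{k+1}^{\delta}\le\theta\,\gamma_{k}^{\delta},\qquad \theta:=1-\frac{q(1-q)R}{\widehat{L}^{2}C_{F}^{2}\,(1+\tau^{-1})^{2}},\qquad 0\le k\le k_{\ast}-1.
\end{equation*}
The delicate point, and the place where the extra hypothesis enters, is to check that $\theta\in(0,1)$: the inequality $\theta<1$ is immediate from $q\in(0,1)$ and $R>0$, while the positivity $\theta>0$ is precisely what the additional assumption \eqref{nu;additional} is designed to guarantee, since it controls the quotient $q/(1-q)$ in terms of $\widehat{L}C_{F}$. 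I expect this verification to be the main obstacle, because one must bound the crude constants $R<3/4$ (from \eqref{R;noisy}) and $(1+\tau^{-1})^{2}<4$ (from $\tau>1$) in the right direction so that \eqref{nu;additional} alone secures $\theta>0$. Iterating the contraction and using $\gamma_{0}^{\delta}=\frac12\|x_{0}-x^{\dagger}\|_{X}^{2}\le\rho$ from \eqref{x0} then gives the geometric decay $\gamma_{k}^{\delta}\le\theta^{k}\rho$ for $0\le k\le k_{\ast}$.

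Finally I would extract the logarithmic count from this geometric decay. For any $k$ with $0\le k<k_{\ast}$, the discrepancy principle \eqref{DiscrepancyPrinciple}, the mean value inequality and \eqref{F'-bounded} give $\tau\delta<\|y^{\delta}-F(x_{k}^{\delta})\|_{Y}\le\delta+\widehat{L}\|x_{k}^{\delta}-x^{\dagger}\|_{X}=\delta+\widehat{L}\sqrt{2\gamma_{k}^{\delta}}$, whence $\gamma_{k}^{\delta}>(\tau-1)^{2}\delta^{2}/(2\widehat{L}^{2})$. Taking $k=k_{\ast}-1$ and combining with $\gamma_{k_{\ast}-1}^{\delta}\le\theta^{\,k_{\ast}-1}\rho$ yields
\begin{equation*}
\theta^{\,k_{\ast}-1}\rho>\frac{(\tau-1)^{2}\delta^{2}}{2\widehat{L}^{2}}.
\end{equation*}
Since $0<\theta<1$, taking logarithms and solving for $k_{\ast}$ gives $k_{\ast}-1<|\log\theta|^{-1}\big(\log(2\widehat{L}^{2}\rho/(\tau-1)^{2})+2|\log\delta|\big)$ for small $\delta$. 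As $\rho,\theta,\tau,\widehat{L}$ are all fixed (independent of $\delta$), this is exactly $k_{\ast}=O(1+|\log\delta|)$; the trivial case $k_{\ast}=0$ needs no argument.
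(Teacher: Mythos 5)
Your contraction argument for $\gamma_{k}^{\delta}$ is essentially correct, and it is a genuinely different route from the paper's; but the one step you yourself flag as the obstacle --- positivity of $\theta$ --- is not secured by \eqref{nu;additional} in the way you anticipate. The quotient $q/(1-q)$ never enters your $\theta$ (its numerator contains $q(1-q)\leq\frac14$), and since $(1+\tau^{-1})^{2}$ sits in the denominator, the relevant bound is the lower bound $(1+\tau^{-1})^{2}>1$, not your upper bound $(1+\tau^{-1})^{2}<4$, which points in the wrong direction. What actually closes the step is an observation you do not make: $\widehat{L}$ and $C_{F}$ are not independent. For $\varepsilon=1$, combining \eqref{F;inv-Holder} with the mean value inequality and \eqref{F'-bounded} gives, for distinct $x,\widetilde{x}\in B$, $\frac{1}{\sqrt{2}}\|x-\widetilde{x}\|_{X}\leq C_{F}\|F(x)-F(\widetilde{x})\|_{Y}\leq C_{F}\widehat{L}\|x-\widetilde{x}\|_{X}$, hence $\widehat{L}^{2}C_{F}^{2}\geq\frac12$. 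Together with $q(1-q)\leq\frac14$, $R<\frac34$ (immediate from \eqref{R;noisy}) and $(1+\tau^{-1})^{2}>1$, this yields
\[
\theta=1-\frac{q(1-q)R}{\widehat{L}^{2}C_{F}^{2}(1+\tau^{-1})^{2}}\geq 1-\frac{(1/4)(3/4)}{1/2}=\frac{5}{8}>0,
\]
unconditionally. (Alternatively: if one had $\theta\leq0$ and $k_{\ast}\geq1$, then $\gamma_{1}^{\delta}=0$, so $\|y^{\delta}-F(x_{1}^{\delta})\|_{Y}\leq\delta<\tau\delta$ and $k_{\ast}\leq1$, which is trivially $O(1)$.) With this inserted your proof is complete --- and, notably, it never uses the extra hypothesis \eqref{nu;additional} at all.

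The paper argues instead at the level of residuals: using \eqref{MDP'}, the Taylor estimate \cite[Lemma\,A.63]{Kirsch}, \eqref{F;inv-Holder} with $\varepsilon=1$, and the monotonicity facts \eqref{KNS;4.5} and \eqref{mono;noisy}, it proves the contraction $\|y^{\delta}-F(x_{k+1}^{\delta})\|_{Y}\leq\widetilde{q}\,\|y^{\delta}-F(x_{k}^{\delta})\|_{Y}$ with $\widetilde{q}=\bigl(q+\frac{LC_{F}}{\sqrt{2}}\|x_{0}-x^{\dagger}\|_{X}\bigr)/\bigl(1-\frac{LC_{F}}{\sqrt{2}}\|x_{0}-x^{\dagger}\|_{X}\bigr)$, where \eqref{nu;additional} combined with \eqref{x0} and \eqref{rho;noisy} is precisely what makes $\widetilde{q}<1$; iterating against $\tau\delta<\|y^{\delta}-F(x_{k_{\ast}-1}^{\delta})\|_{Y}$ then gives the logarithmic bound. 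So the two proofs contract different quantities. The paper's residual contraction meshes directly with the discrepancy principle and involves the Lipschitz constant $L$ of $F'$ in its rate $\widetilde{q}$; your error contraction pays two conversion steps (the stability estimate to bound the residual below by $\gamma_{k}^{\delta}$, and the mean value inequality to bound $\gamma_{k_{\ast}-1}^{\delta}$ below by a multiple of $\delta^{2}$), but in exchange it reveals that \eqref{nu;additional} is an artifact of the residual-based argument rather than intrinsic to the conclusion.
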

\begin{proof}
To establish the logarithmic estimate for $k_{\ast}$, we prove that
\begin{equation} \label{aim;kast}
\|y^{\delta}-F(x_{k+1}^{\delta})\|_{Y}\leq\widetilde{q}\|y^{\delta}-F(x_{k}^{\delta})\|_{Y},\hspace{0.2in}0\leq k<k_{\ast},
\end{equation}
where $\widetilde{q}$ is a number to be chosen later.

It follows from \eqref{MDP'} that
\begin{align} \label{prep;kast;1}
&q\|y^{\delta}-F(x_{k}^{\delta})\|_{Y} \nonumber \\
&\hspace{0.1in}\geq\|y^{\delta}-F(x_{k+1}^{\delta})\|_{Y}-\|F(x_{k+1}^{\delta})-F(x_{k}^{\delta})-F'(x_{k}^{\delta})(x_{k+1}^{\delta}-x_{k}^{\delta}))\|_{Y}.
\end{align}
Using \cite[Lemma\,A.63]{Kirsch} and \eqref{F;inv-Holder}, we have
\begin{align} \label{prep;kast;2}
&\|F(x_{k+1}^{\delta})-F(x_{k}^{\delta})-F'(x_{k}^{\delta})(x_{k+1}^{\delta}-x_{k}^{\delta}))\|_{Y} \nonumber \\
&\hspace{0.1in}\leq\frac{L}{2}\|x_{k+1}^{\delta}-x_{k}^{\delta}\|_{X}^{2} \nonumber \\
&\hspace{0.1in}\leq\frac{LC_{F}}{\sqrt{2}}\|x_{k+1}^{\delta}-x_{k}^{\delta}\|_{X}\|F(x_{k+1}^{\delta})-F(x_{k}^{\delta})\|_{Y}.
\end{align}
Substituting \eqref{prep;kast;2} into \eqref{prep;kast;1}, we obtain
\begin{align} \label{prep;kast;3}
&q\|y^{\delta}-F(x_{k}^{\delta})\|_{Y} \nonumber \\
&\hspace{0.1in}\geq\|y^{\delta}-F(x_{k+1}^{\delta})\|_{Y}-\frac{LC_{F}}{\sqrt{2}}\|x_{k+1}^{\delta}-x_{k}^{\delta}\|_{X}\|F(x_{k+1}^{\delta})-F(x_{k}^{\delta})\|_{Y} \nonumber \\
&\hspace{0.1in}\geq\|y^{\delta}-F(x_{k+1}^{\delta})\|_{Y} \nonumber \\
&\hspace{0.3in}-\frac{LC_{F}}{\sqrt{2}}\|x_{k+1}^{\delta}-x_{k}^{\delta}\|_{X}(\|y^{\delta}-F(x_{k+1}^{\delta})\|_{Y}+\|y^{\delta}-F(x_{k}^{\delta})\|_{Y}).
\end{align}
It follows from \eqref{KNS;4.5}, \eqref{mono;noisy}, \eqref{x0}, \eqref{rho;noisy} and \eqref{nu;additional} that
\begin{align} \label{prep;kast;4}
1-\frac{LC_{F}}{\sqrt{2}}\|x_{k+1}^{\delta}-x_{k}^{\delta}\|_{X}&\geq 1-\frac{LC_{F}}{\sqrt{2}}\|x_{0}-x^{\dagger}\|_{X}\geq 1-LC_{F}\sqrt{\rho} \nonumber \\
&=1-\frac{q}{4\sqrt{2}\widehat{L}C_{F}}>0.
\end{align}
Combining \eqref{prep;kast;3} and \eqref{prep;kast;4} yields
\begin{equation} \label{prep;kast;5}
\|y^{\delta}-F(x_{k+1}^{\delta})\|_{Y}\leq\frac{q+\frac{LC_{F}}{\sqrt{2}}\|x_{k+1}^{\delta}-x_{k}^{\delta}\|_{X}}{1-\frac{LC_{F}}{\sqrt{2}}\|x_{k+1}^{\delta}-x_{k}^{\delta}\|_{X}}\|y^{\delta}-F(x_{k}^{\delta})\|_{Y}.
\end{equation}
Moreover, we obtain
\begin{equation} \label{prep;kast;6}
\frac{q+\frac{LC_{F}}{\sqrt{2}}\|x_{k+1}^{\delta}-x_{k}^{\delta}\|_{X}}{1-\frac{LC_{F}}{\sqrt{2}}\|x_{k+1}^{\delta}-x_{k}^{\delta}\|_{X}}\leq\frac{q+\frac{LC_{F}}{\sqrt{2}}\|x_{0}-x^{\dagger}\|_{X}}{1-\frac{LC_{F}}{\sqrt{2}}\|x_{0}-x^{\dagger}\|_{X}}=:\widetilde{q}
\end{equation}
since $y=\frac{q+\frac{LC_{F}}{\sqrt{2}}s}{1-\frac{LC_{F}}{\sqrt{2}}s}$ is monotonically increasing for $s\leq\frac{\sqrt{2}}{LC_{F}}$ and we have
\[
\|x_{k+1}^{\delta}-x_{k}^{\delta}\|_{X}\leq\|x_{0}-x^{\dagger}\|_{X}\leq\sqrt{2\rho}=\frac{q}{4\widehat{L}LC_{F}^{2}}<\frac{\sqrt{2}}{LC_{F}}
\]
by \eqref{KNS;4.5}, \eqref{mono;noisy}, \eqref{x0}, \eqref{rho;noisy} and \eqref{nu;additional}. Combining \eqref{prep;kast;5} and \eqref{prep;kast;6} yields \eqref{aim;kast}. Hence, it follows from \eqref{DiscrepancyPrinciple} and repeating \eqref{aim;kast} that
\[
\tau\delta<\|y^{\delta}-F(x_{k_{\ast}-1}^{\delta})\|_{Y}\leq\widetilde{q}\|y^{\delta}-F(x_{k_{\ast}-2}^{\delta})\|_{Y}\leq\widetilde{q}^{k_{\ast}-1}\|y^{\delta}-F(x_{0})\|_{Y}.
\]
By \eqref{x0}, \eqref{rho;noisy} and \eqref{nu;additional}, we have $\widetilde{q}<1$, which yields the desired estimate for $k_{\ast}$.
\end{proof}

\section{Application to IFM} \label{section:application}
In this section, we present global reconstruction algorithms for approximate solutions to the IFM by using the LM-method. We first note that Theorem~\ref{thm:exact;Hilbert} and Theorem~\ref{thm:noisy;Hilbert} hold provided that an appropriate initial data, namely $x_{0}$ as given in \eqref{x0}. This means that these convergences are local. These local convergences, combined with the discussion on how such initial data can be obtained (see Lemma~\ref{lem:finite}), lead to global algorithms. By combining the local convergences and Lemma~\ref{lem:finite} with the stability estimate established in \cite{AS}, we design global algorithms to address the IFM for exact and noisy data, respectively.

Let us give a list of assumptions throughout this section.
\begin{itemize}
\setlength{\leftskip}{-0.1in}
\setlength{\itemsep}{-3.5pt}
\item[-] $X$ and $Y$ be Banach spaces;
\item[-] $A\subset X$ be an open set;
\item[-] $W\subset X$ be a finite-dimensional subspace;
\item[-] $F\in C^{1}(A, Y)$ be such that $\left.F\right|_{W\cap A}$ and $\left(\left.F\right|_{W\cap A}\right)'$ are Lipschitz continuous; 
\item[-] $Q:Y\rightarrow Y$ be a continuous finite-rank operator;
\item[-] $K\subseteq W\cap A$ be a compact set;
\item[-] and $\widetilde{C}$ be a positive constant such that
\begin{equation} \label{key}
\|x-\widetilde{x}\|_{X}\leq 2\widetilde{C}\|Q(F(x))-Q(F(\widetilde{x}))\|_{Y},\hspace{0.2in}x, \widetilde{x}\in W\cap A.
\end{equation}
\end{itemize}

We now consider the equation~$y=Q(F(x^{\dagger}))\in Y$, where $x^{\dagger}\in K$ is the unknown data. Our aim is to reconstruct the solution~$x^{\dagger}$ from exact data~$y$ and noisy data~$y^{\delta}$ by applying Theorem~\ref{thm:exact;Hilbert} and Theorem~\ref{thm:noisy;Hilbert}, respectively. We remark that we can introduce inner products for the domain $W$ and the range of $Q$ since these spaces are finite-dimensional. Moreover, the operator~$Q\circ F: W\cap A\rightarrow Y$ is Fr\'{e}chet derivative and its derivative $(Q\circ F)'$ is Lipschitz continuous and bounded in $K$. It follows from these properties and the assumption~\eqref{key} that the operator $Q\circ F$ satisfies Assumption~\ref{asm;Holder}. Then, we can apply Theorem~\ref{thm:exact;Hilbert} and Theorem~\ref{thm:noisy;Hilbert} if the constants $q$ and $\rho$ satisfy the assumptions in each theorem, and the initial guess~$x_{0}$ satisfies the condition~\eqref{x0}. On the other hand, we can find the initial guess~$x_{0}$ satisfying~\eqref{x0} as stated in \cite[Section~3.2]{AS}. Indeed, by the compactness of $K$, there exists a finite lattice $\{x^{(j)}\mid j\in J\}\subseteq K$ such that
\begin{equation} \label{fromCompact}
K\subseteq\bigcup_{j\in J}B_{X}\left(x^{(j)}, \frac{\rho}{2\widetilde{L}\widetilde{C}\|Q\|_{\mathcal{L}(Y)}}\right),
\end{equation}
where $\rho>0$ is given by \eqref{rho} for exact data and \eqref{rho;noisy} for noisy data, $\widetilde{C}$ is given by \eqref{key} and $\widetilde{L}>0$ is a Lipschitz constant of $F$, that is
\begin{equation} \label{5;F-Lip}
\|F(x)-F(\widetilde{x})\|_{Y}\leq\widetilde{L}\|x-\widetilde{x}\|_{X},\hspace{0.2in}x, \widetilde{x}\in W\cap A.
\end{equation}
To find the initial guess~$x_{0}$, we first introduce the following lemma, and then take $x^{(j)}$ in Lemma~\ref{lem:finite} as $x_{0}$.

\begin{lemma}[\textup{\cite[Lemma\,2]{AS}}] \label{lem:finite}
Under the above assumptions, we have \vspace{-0.05in}
\begin{enumerate}
\renewcommand{\labelenumi}{(\roman{enumi})}
\item there exists $j\in J$ such that
\begin{equation} \label{QF}
\|Q(F(x^{(j)}))-Q(F(x^{\dagger}))\|_{Y}<\frac{\rho}{2\widetilde{C}};
\end{equation}
\item if \eqref{QF} holds for some $j\in J$, then we have
\[
\|x^{(j)}-x^{\dagger}\|_{X}<\rho.
\]
\end{enumerate}
\end{lemma}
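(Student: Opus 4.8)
The plan is to prove part (i) by using the covering property~\eqref{fromCompact} together with the Lipschitz bound~\eqref{5;F-Lip} and the boundedness of $Q$, and then to prove part (ii) as an immediate consequence of the stability estimate~\eqref{key}. These two parts are in fact almost independent, so I would handle them separately.

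For part (i), I would argue as follows. Since $x^{\dagger}\in K$ and the finite collection of balls $B_{X}\bigl(x^{(j)}, \tfrac{\rho}{2\widetilde{L}\widetilde{C}\|Q\|_{\mathcal{L}(Y)}}\bigr)$ covers $K$ by~\eqref{fromCompact}, there exists an index $j\in J$ with
\[
\|x^{(j)}-x^{\dagger}\|_{X}<\frac{\rho}{2\widetilde{L}\widetilde{C}\|Q\|_{\mathcal{L}(Y)}}.
\]
Now I would push this estimate through the operator $Q\circ F$. Applying the operator norm of $Q$ followed by the Lipschitz bound~\eqref{5;F-Lip} on $F|_{W\cap A}$ gives
\[
\|Q(F(x^{(j)}))-Q(F(x^{\dagger}))\|_{Y}\leq\|Q\|_{\mathcal{L}(Y)}\|F(x^{(j)})-F(x^{\dagger})\|_{Y}\leq\|Q\|_{\mathcal{L}(Y)}\widetilde{L}\|x^{(j)}-x^{\dagger}\|_{X}.
\]
Substituting the covering estimate, the factors $\|Q\|_{\mathcal{L}(Y)}$ and $\widetilde{L}$ cancel against the denominator, yielding exactly the bound $\tfrac{\rho}{2\widetilde{C}}$ claimed in~\eqref{QF}. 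Here one must check that $x^{\dagger}$ and the chosen $x^{(j)}$ both lie in $W\cap A$ so that~\eqref{5;F-Lip} applies, which holds because $K\subseteq W\cap A$ and the lattice points are taken in $K$.

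For part (ii), assuming~\eqref{QF} holds for some $j$, I would invoke the stability estimate~\eqref{key} directly with $x=x^{(j)}$ and $\widetilde{x}=x^{\dagger}$ (again both in $W\cap A$). This gives
\[
\|x^{(j)}-x^{\dagger}\|_{X}\leq 2\widetilde{C}\|Q(F(x^{(j)}))-Q(F(x^{\dagger}))\|_{Y}<2\widetilde{C}\cdot\frac{\rho}{2\widetilde{C}}=\rho,
\]
which is the desired conclusion. The main subtlety throughout is not any single calculation, which is routine, but rather the bookkeeping of the constants: one has to verify that the radius in the covering~\eqref{fromCompact} is chosen precisely so that the $\|Q\|_{\mathcal{L}(Y)}$ and $\widetilde{L}$ factors introduced in part (i) cancel, leaving the threshold $\tfrac{\rho}{2\widetilde{C}}$ that part (ii) then feeds into~\eqref{key} to recover $\rho$. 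In other words, the only real content is that the definition of the lattice spacing is reverse-engineered from the stability constant $\widetilde{C}$, so the two parts interlock exactly.
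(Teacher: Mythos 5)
Your proposal is correct and follows essentially the same route as the paper's proof: part (i) via the covering \eqref{fromCompact}, the Lipschitz bound \eqref{5;F-Lip}, and the operator norm of $Q$, with the constants cancelling to give the threshold $\frac{\rho}{2\widetilde{C}}$; part (ii) via a direct application of \eqref{key} combined with \eqref{QF}. Your additional checks (that $x^{(j)}, x^{\dagger}\in K\subseteq W\cap A$ so that \eqref{5;F-Lip} and \eqref{key} apply, and that the strict inequality from the covering is preserved) are sound and only make explicit what the paper leaves implicit.
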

\begin{proof}
\begin{enumerate}
\renewcommand{\labelenumi}{(\roman{enumi})}
\item It follows from the compactness of $K$, $x^{\dagger}\in K$ and \eqref{fromCompact} that there exists $j\in J$ such that
\begin{equation} \label{prep;i;1}
\|x^{(j)}-x^{\dagger}\|_{X}<\frac{\rho}{2\widetilde{L}\widetilde{C}\|Q\|_{\mathcal{L}(Y)}}.
\end{equation}
Moreover, using $x^{(j)}\in K$ and \eqref{5;F-Lip}, we have
\begin{equation} \label{prep;i;2}
\|F(x^{(j)})-F(x^{\dagger})\|_{Y}\leq\widetilde{L}\|x^{(j)}-x^{\dagger}\|_{X}.
\end{equation}
Combining \eqref{prep;i;1} and \eqref{prep;i;2} yields
\begin{align*} 
\|Q(F(x^{(j)}))-Q(F(x^{\dagger}))\|_{Y}&\leq\|Q\|_{\mathcal{L}(Y)}\|F(x^{(j)})-F(x^{\dagger})\|_{Y} \\
&\leq\|Q\|_{\mathcal{L}(Y)}\widetilde{L}\|x^{(j)}-x^{\dagger}\|_{X}\leq\frac{\rho}{2\widetilde{C}}.
\end{align*}
\item Assume \eqref{QF} for some $j\in J$. It follows from \eqref{key} that
\begin{equation} \label{prep;ii}
\|x^{(j)}-x^{\dagger}\|_{X}\leq 2\widetilde{C}\|Q(F(x^{(j)}))-Q(F(x^{\dagger}))\|_{Y}.
\end{equation}
Combining \eqref{prep;ii} with \eqref{QF} yields
\[
\|x^{(j)}-x^{\dagger}\|_{X}<2\widetilde{C}\frac{\rho}{2\widetilde{C}}=\rho.
\]
\end{enumerate}
The proof is completed.
\end{proof}

We now present the following global reconstruction algorithm for approximate solutions to the IFM, starting with the one for exact data. We emphasize that, according to Theorem~\ref{thm:exact;Hilbert}, the sequence $\{x_k\}_{k=0}^{\infty}$ generated by Algorithm~\ref{al:exact} is guaranteed to converge to $x^{\dagger}$. Furthermore, as \eqref{key} is assumed, note that $\alpha_{k}$ in the following algorithm is unique. Since we have already established the convergence rate in Theorem~\ref{thm:exact;Hilbert}, we are able to calculate the number of iterations required to achieve the desired level of accuracy. We define this number of iterations as $M$.
\begin{lstlisting}[caption=Reconstruction of $x^{\dagger}$ from $Q(F(x^{\dagger}))$ \label{al:exact}]
1: Input $X, Y, W, K, Q, F, Q(F(x^{\dagger})), \rho, \widetilde{L}, \widetilde{C}$ and $M$.
2: Equip $W$ and $Q(Y)$ with equivalent euclidean scalar products. 
3: Find a finite lattice $\{x^{(j)}\mid j\in J\}\subseteq K$ so that $\eqref{fromCompact}$ is satisfied. 
4: for $j\in J$ do 
5:   Compute $Q(F(x^{(j)}))$. 
6:   if $\eqref{QF}$ is satisfied then 
7:     Set $x_{0}=x^{(j)}$. 
8:     exit for
9:   end if 
10: end for 
11: for $k=0, \ldots, M$ do
12:   Set $\alpha_{k}$ satisfying
            $\alpha_{k}\|Q\left(\left.F\right|_{W\cap A}\right)'(x_{k})\left(\left.F\right|_{W\cap A}\right)'(x_{k})^{\ast}Q^{\ast}+\alpha_{k}I)^{-1}(y-Q(F(x_{k})))\|_{Y}$
                $=q\|y-Q(F(x_{k}))\|_{Y}$.
13:   Set $x_{k+1}=x_{k}+\left(\left(\left(\left.F\right|_{W\cap A}\right)'(x_{k})^{\ast}Q^{\ast}Q\left(\left.F\right|_{W\cap A}\right)'(x_{k})\right)+\alpha_{k}I\right)^{-1}$                                             $\left(\left.F\right|_{W\cap A}\right)'(x_{k})^{\ast}Q^{\ast}(y-Q(F(x_{k})))$.
14:   if the stopping criterion is satisfied then
15:     exit for
16:   end if
17: end for
18: Output $x_{k+1}$.
\end{lstlisting}

Next, we present a global reconstruction algorithm for approximate solutions to the IFM for the case where only noisy data $y^{\delta}$ satisfying \eqref{noise} is available. As stated in Theorem~\ref{thm:noisy;Hilbert}, the sequence $\{x_k^{\delta}\}$ decreases monotonically up to $k=k_{\ast}$. Additionally, given \eqref{key}, it follows that $\alpha_{k}$ in the following algorithm is unique.
\begin{lstlisting}[caption=Reconstruction of $x^{\dagger}$ from $y^{\delta}$ \label{al:noisy}]
1: Input $X, Y, W, K, Q, F, \delta, y^{\delta}, \rho, \widetilde{L}, \widetilde{C}$, $\tau$ and $M$.
2: Equip $W$ and $Q(Y)$ with equivalent euclidean scalar products. 
3: Find a finite lattice $\{x^{(j)}\mid j\in J\}\subseteq K$ so that $\eqref{fromCompact}$ is satisfied. 
4: for $j\in J$ do 
5:   Compute $Q(F(x^{(j)}))$. 
6:   if $\eqref{QF}$ is satisfied then 
7:     Set $x_{0}=x^{(j)}$. 
8:     exit for
9:   end if 
10: end for 
11: for $k=0, \ldots, M$ do
12:   Set $\alpha_{k}$ satisfying
            $\alpha_{k}\|Q\left(\left.F\right|_{W\cap A}\right)'(x_{k}^{\delta})\left(\left.F\right|_{W\cap A}\right)'(x_{k}^{\delta})^{\ast}Q^{\ast}+\alpha_{k}I)^{-1}(y^{\delta}-Q(F(x_{k}^{\delta})))\|_{Y}$
                $=q\|y^{\delta}-Q(F(x_{k}^{\delta}))\|_{Y}$.
13:   Set $x_{k+1}^{\delta}=x_{k}^{\delta}+\left(\left(\left(\left.F\right|_{W\cap A}\right)'(x_{k}^{\delta})^{\ast}Q^{\ast}Q\left(\left.F\right|_{W\cap A}\right)'(x_{k}^{\delta})\right)+\alpha_{k}I\right)^{-1}$                                             $\left(\left.F\right|_{W\cap A}\right)'(x_{k}^{\delta})^{\ast}Q^{\ast}(y^{\delta}-Q(F(x_{k}^{\delta})))$.
14:   if $\|y^{\delta}-Q(F(x_{k+1}^{\delta}))\|_{Y}\leq\tau\delta$ is satisfied then
15:      exit for
16:   end if
17:   else if the stopping criterion is satisfied then
18:      exit for
19:   end if
20: end for
21: Output $x_{k+1}^{\delta}$.
\end{lstlisting}

\begin{remark}
In addition to the stopping criterion, we have to consider the discrepancy principle; we should stop the iteration after $k_{\ast}$ steps, where 
\[
k_{\ast}=\min\{k\mid \|y^{\delta}-Q(F(x_{k+1}^{\delta}))\|_{Y}\leq\tau\delta\}.
\]
If the iterate $x_{k+1}^{\delta}$ satisfies $\|y^{\delta}-Q(F(x_{k+1}^{\delta}))\|_{Y}\leq\tau\delta$ in Algorithm~\ref{al:noisy}, then the number $k+1$ is equal to $k_{\ast}$. Hence, it follows from Theorem~\ref{thm:noisy;Hilbert} that the output~$x_{k+1}^{\delta}$ satisfies \eqref{stab;noisy} if \eqref{asm;noisy} holds for some $C>0$.

There is a possibility that the iteration may stop before $k_{\ast}$ steps. Then, using arguments similar to the derivation of the convergence rate~\eqref{stab;noisy}, we have
\[
\frac{1}{2}\|x_{M+1}^{\delta}-x^{\dagger}\|_{X}^{2}\leq\left(C-\frac{q(1-q)}{\widetilde{L}^{2}}R\tau^{2}(M+1)\right)\delta^{2}.
\]
\end{remark}

\section{Summary and discussion}
Before closing this paper, we give a summary and discussion for our results. Let us start giving a summary. In this paper, we study convergence of the LM-method assuming H\"{o}lder stability estimate. We prove local convergence of the LM-method and establish convergence rates for exact and noisy data, respectively. Furthermore, we apply these results to propose global reconstruction algorithms for approximate solutions to the IFM. In a global reconstruction algorithm for exact data, based on convergence rate we have established, we can determine the number of iterations required to achieve the desired level of accuracy. For noisy data, it may be necessary to stop the algorithm using the discrepancy principle; however, in this case, the number of iterations needed to achieve the desired accuracy can also be determined.

We have the following discussion of our results. As mentioned in Section~\ref{section:introduction}, we had an expectation based on the numerical study by \cite{Hohage} that the LM-method gives faster convergence compared to the Landweber iteration. However, the convergence rates obtained in this paper seem to be similar to those in \cite{dHQS} and \cite{MG}. Nevertheless, we anticipate that the LM-method converges faster than the Landweber iteration in most cases. The reason for this is that we evaluated $-\frac{1}{2}\|F'(x_{k})^{\ast}(F'(x_{k})F'(x_{k})^{\ast}+\alpha_{k}I)^{-1}(y-F(x_{k}))\|_{X}^{2}\leq0$ in \eqref{m+1;prep;2} in the proof of Theorem~\ref{thm:exact;Hilbert}, but this term may be nonzero, that is, negative in most cases. Hence, it would be important to compare the constants in our theorem with those in \cite{dHQS} and \cite{MG} through numerical experiments. Furthermore, in order to widen the application, it is important to extend our results to Banach spaces.

\bigskip
\textbf{Acknowlegements.} The first author was supported in part by JST SPRING, Grant Number JPMJSP2125. The third author was partially supported by JSPS KAKENHI (Grant Nos.\ JP22K03366).

\bibliographystyle{plain}
\bibliography{002}

\bigskip \noindent
Akari Ishida \\
Graduate School of Mathematics, Nagoya University, \\
Furocho, Chikusa-ku, Nagoya, Aichi 464-8602, Japan \\
E-mail: \texttt{akari.ishida.c5@math.nagoya-u.ac.jp} 

\bigskip \noindent
Sei Nagayasu \\
Graduate School of Science, University of Hyogo, \\
2167 Shosha, Himeji, Hyogo 671-2201, Japan \\
E-mail: \texttt{sei@sci.u-hyogo.ac.jp}

\bigskip \noindent
Gen Nakamura \\
Department of Mathematics, Hokkaido University, Sapporo 060-0810,
Research Institute for Electronic Science, Hokkaido University, Sapporo 001-0020, Japan, \\
E-mail: \texttt{nakamuragenn@gmail.com}
\end{document}